\definecolor{gr}{rgb}   {0.,   0.69,   0.23 }
\definecolor{bl}{rgb}   {0.,   0.5,   1. }
\definecolor{mg}{rgb}   {0.85,  0.,    0.85}
\definecolor{gy}{rgb}   {0.9,  0.9,   0.9}
\definecolor{yl}{rgb}   {0.8,  0.7,   0.}
\definecolor{or}{rgb}  {0.7,0.2,0.2}
\tikzset{
	ddot/.style={circle,fill=white,draw=black,inner sep=0pt,minimum size=0.8mm},
	>=stealth,
	}
\tikzset{
	ddot2/.style={circle,fill=black,draw=black,inner sep=0pt,minimum size=0.8mm},
	>=stealth,
	}
\newtheorem{theorem}{Theorem} [section]
\newtheorem{lemma}[theorem]{Lemma}
\newtheorem{proposition}[theorem]{Proposition}
\newtheorem{remark}[theorem]{Remark}
\newtheorem{example}{Example}
\DeclareMathOperator*{\supp}{supp}
\newcommand{\noi}{\noindent}
\newcommand{\Z}{\mathbb{Z}}
\newcommand{\R}{\mathbb{R}}
\newcommand{\T}{\mathbb{T}}
\newcommand{\Pf}{\mathfrak{P}}
\newcommand{\F}{\mathcal{F}}
\newcommand{\dl}{\delta}
\newcommand{\Dl}{\Delta}
\newcommand{\eps}{\varepsilon}
\newcommand{\G}{\Gamma}
\newcommand{\ft}{\widehat}
\newcommand{\wt}{\widetilde}
\newcommand{\cj}{\overline}
\newcommand{\dt}{\partial_t}
\newcommand{\embeds}{\hookrightarrow}
\newcommand{\ta}{\theta}
\renewcommand{\l}{\ell}
\newcommand{\les}{\lesssim}
\newcommand{\ges}{\gtrsim}
\newcommand{\jb}[1]
{\langle #1 \rangle}
\newcommand{\ind}{\mathbf 1}
\newcommand{\N}{\mathbb{N}}
\newtheorem*{ackno}{Acknowledgements}
\numberwithin{equation}{section}
\numberwithin{theorem}{section}
\begin{document}
\baselineskip = 13.5pt

\title[LWP of the periodic quadratic NLS in negative Sobolev spaces]
{Local well-posedness of the periodic nonlinear Schr\"odinger equation with a quadratic nonlinearity $\cj{u}^2$ in negative Sobolev spaces}

\author[R.~Liu]
{Ruoyuan Liu}

\address{
Ruoyuan Liu,  School of Mathematics\\
The University of Edinburgh\\
and The Maxwell Institute for the Mathematical Sciences\\
James Clerk Maxwell Building\\
The King's Buildings\\
Peter Guthrie Tait Road\\
Edinburgh\\ 
EH9 3FD\\
 United Kingdom}

\email{ruoyuan.liu@ed.ac.uk}


\subjclass[2020]{35Q55}
\keywords{nonlinear Schr\"odinger equation; well-posedness}
%


\begin{abstract}
We study low regularity local well-posedness of the nonlinear Schr\"odinger equation (NLS) with the quadratic nonlinearity~$\cj{u}^2$, posed on one-dimensional and two-dimensional tori. While the relevant bilinear estimate with respect to the $X^{s, b}$-space is known to fail when the regularity $s$ is below some threshold value, we establish local well-posedness for such low regularity by introducing modifications on the $X^{s, b}$-space.

\end{abstract}

\maketitle

%

\section{Introduction}
\label{SEC:intro}

\subsection{Quadratic nonlinear Schr\"odinger equations}
In this paper, we consider the following Cauchy problem for the quadratic nonlinear Schr\"odinger equation (NLS) on periodic domains:
\begin{equation}
\begin{cases}
i \dt u + \Dl u = \cj{u}^2 \\
u|_{t = 0} = u_0
\end{cases}
\quad (x, t) \in \mathcal{M} \times \R,
\label{qNLS}
\end{equation}

\noi
where $\mathcal{M} = \T$ or $\T^2$ with $\T = \R / 2 \pi \Z$.

Our main goal is to establish low regularity local well-posedness of the quadratic NLS \eqref{qNLS} on periodic domains $\T$ or $\T^2$. For instructive purposes, we first provide some background on the quadratic NLS
\begin{align}
i \dt u + \Dl u = \mathcal{N} (u, u),
\label{qNLS_gen}
\end{align}

\noi
where $\mathcal{N} (u, u)$ can be $u^2$, $\cj{u}^2$, or $|u|^2$. Note that on $\R^d$, if $u$ is a solution to \eqref{qNLS_gen}, then $u_\lambda (x, t) := \lambda^2 u (\lambda x, \lambda^2 t)$ is also a solution to \eqref{qNLS_gen} for any $\lambda > 0$. This scaling symmetry induces the following scaling critical Sobolev regularity:
\begin{align*}
s_{\text{crit}} = \frac{d}{2} - 2.
\end{align*}

\noi
When $d \leq 3$, the scaling critical regularity is negative, which often fails to predict well-posedness and ill-posedness issues. In this paper, we mainly focus on the cases when $d = 1$ and $d = 2$.

Let us now review some previous results on the quadratic NLS \eqref{qNLS_gen}, starting with the real line case. In \cite{KPV96}, Kenig-Ponce-Vega used the Bourgain space $X^{s, b}$ (see Subsection \ref{SUBSEC:Xsb}) to prove local well-posedness of \eqref{qNLS_gen} on $\R$ for all types of nonlinearities $u^2$, $\cj{u}^2$, and $|u|^2$. Specifically, they established the following bilinear estimates:
\begin{align}
\| u v \|_{X^{s, b - 1}} &\les \| u \|_{X^{s, b}} \| v \|_{X^{s, b}}, \label{bi1} \\
\| \cj{u} \cj{v} \|_{X^{s, b - 1}} &\les \| u \|_{X^{s, b}} \| v \|_{X^{s, b}} \label{bi2}
\end{align}
for $s > -\frac 34$ and $b = \frac 12 +$ and\footnote{Here, $\frac 12 +$ means $\frac 12 + \eps$ for some $\eps > 0$.}
\begin{align}
\| u \cj{v} \|_{X^{s, b - 1}} &\les \| u \|_{X^{s, b}} \| v \|_{X^{s, b}} \label{bi3}
\end{align}
for $s > -\frac 14$ and $b = \frac 12 +$. In addition, in the same paper, they showed that \eqref{bi1} and \eqref{bi2} fail for $s < -\frac 34$ and \eqref{bi3} fails for $s < -\frac 14$. The failure of these bilinear estimates at the endpoint regularities were established in \cite{NTT}. Despite the failure of the bilinear estimate \eqref{bi1}, Bejenaru-Tao \cite{BT} showed local well-posedness of \eqref{qNLS_gen} on $\R$ with nonlinearity $\mathcal{N} (u, u) = u^2$ for $s \geq -1$ by introducing weighted function spaces. Moreover, they proved ill-posedness of the same equation for $s < -1$. Later, Kishimoto \cite{Kish08} proved local well-posedness of \eqref{qNLS_gen} on $\R$ with $\mathcal{N} (u, u) = \cj{u}^2$ for $s \geq -1$ using different weighted function spaces. He also proved ill-posedness of the same equation for $s < -1$. Regarding \eqref{qNLS_gen} on $\R$ with $\mathcal{N} (u, u) = |u|^2$, Kishimoto \cite{Kishm} showed local well-posedness for $s \geq -\frac 14$ and ill-posedness for $s < -\frac 14$ (see also \cite{KT}). See also \cite{IO, IU, Kish19} for stronger ill-posedness results in the same ranges of $s$. For convenience, we summarize these results in Table \ref{table1}. Note that for all these nonlinearities $u^2$, $\cj{u}^2$, and $|u|^2$, well-posedness and ill-posedness results are sharp. Also, for all these nonlinearities, ill-posedness occurs before $s$ reaches the scaling critical regularity of \eqref{qNLS_gen} on $\R$: $s_{\text{crit}} = -\frac 32$.

\begin{table} \centering
\arrayrulewidth = 1pt
\renewcommand{\arraystretch}{1.5}
\begin{tabular}[t]{| c | c | c | c | c | c | c |}
\hline
Setting & \multicolumn{3}{c |}{$\R$} & \multicolumn{3}{ c |}{$\R^2$} \\
\hline
Nonlinearity $\mathcal{N} (u, u)$ & $u^2$ & $\cj{u}^2$ & $|u|^2$ & $u^2$ & $\cj{u}^2$ & $|u|^2$ \\
\hline
Scaling critical regularity & \multicolumn{3}{c |}{$s_{\text{crit}} = - \frac 32$} & \multicolumn{3}{ c |}{$s_{\text{crit}} = - 1$} \\
\hline 
$X^{s, b}$-bilinear estimate & $s > - \frac 34$ & $s > - \frac 34$ & $s > - \frac 14$ & $s > -\frac 34$ & $s > - \frac 34$ & $s > -\frac 14$ \\
\hline
Failure of $X^{s, b}$-bilinear estimate & $s \leq - \frac 34$ & $s \leq - \frac 34$ & $s \leq - \frac 14$ & $s \leq - \frac 34$ & $s \leq - \frac 34$ & $s \leq - \frac 14$ \\
\hline
Local well-posedness & $s \geq -1$ & $s \geq -1$ & $s \geq - \frac 14$ & $s > -1$ & $s > -1$ & $s \geq - \frac 14$ \\
\hline
Ill-posedness & $s < -1$ & $s < -1$ & $s < -\frac 14$ & $s \leq -1$ & $s \leq -1$ & $s < -\frac 14$ \\
\hline
\end{tabular}
\vspace{8pt}
\caption{Known results for the quadratic NLS on $\R$ and $\R^2$. Note that in all cases, local well-posedness and ill-posedness results are sharp.}
\label{table1}
\end{table}

Let us also mention well-posedness and ill-posedness results of \eqref{qNLS_gen} on $\R^2$, which are again summarized in Table \ref{table1}. The $X^{s, b}$-bilinear estimates \eqref{bi1}, \eqref{bi2}, and \eqref{bi3} were established in \cite{CDKS, Sta97}. The failure of these $X^{s, b}$-bilinear estimates for lower values of $s$ was shown in \cite{CDKS, NTT}. For local well-posedness of \eqref{qNLS_gen} on $\R^2$, see \cite{BS, Kishm, Kish19}. For ill-posedness of \eqref{qNLS_gen} on $\R^2$, see \cite{IO, IU, Kish19}. From Table \ref{table1}, we note that the ill-posedness on $\R^2$ for $\mathcal{N} (u, u) = |u|^2$ occurs before $s$ reaches the scaling critical regularity $s_{\text{crit}} = -1$. Also, we can see that all well-posedness and ill-posedness results are sharp on $\R^2$.

We now turn our attention to well-posedness and ill-posedness results of \eqref{qNLS_gen} on periodic domains $\T$ and $\T^2$. The results are summarized in Table \ref{table2}. On $\T$, for all nonlinearities $u^2$, $\cj{u}^2$, and $|u|^2$, the $X^{s, b}$-bilinear estimates \eqref{bi1}, \eqref{bi2}, and \eqref{bi3} for $s \geq 0$ follows immediately from the $L^3$-Strichartz estimate, which is obtained by interpolating the $L^4$-Strichartz estimate on $\T$ (see \cite{Bour93, Zyg}) and the trivial $L^2$-bound. In \cite{KPV96}, Kenig-Ponce-Vega established bilinear estimates \eqref{bi1} (for $u^2$) and \eqref{bi2} (for $\cj{u}^2$) on $\T$ for $s > -\frac 12$ and $b = \frac 12 +$ and showed corresponding local well-posedness results. They also showed that \eqref{bi1} and \eqref{bi2} fail on $\T$ when $s < -\frac 12$ and \eqref{bi3} (for $|u|^2$) fails on $\T$ when $s < 0$. Later, Kishimoto \cite{Kish19} showed ill-posedness of \eqref{qNLS_gen} on $\T$ with all types of nonlinearities for regularity ranges shown in Table \ref{table2}. Here, we note that there are gaps between local well-posedness and ill-posedness results for nonlinearities $u^2$ and $\cj{u}^2$. Also, the quadratic NLS \eqref{qNLS_gen} with nonlinearity $|u|^2$ behaves worse on $\T$ than on $\R$, since ill-posedness on $\T$ occurs for a wider range of $s$ than on $\R$.

For \eqref{qNLS_gen} on $\T^2$ with all nonlinearities $u^2$, $\cj{u}^2$, and $|u|^2$, the $X^{s, b}$-bilinear estimates \eqref{bi1}, \eqref{bi2}, and \eqref{bi3} for $s > 0$ follows from the $L^3$-Strichartz estimate with an $\eps$ derivative loss, which is obtained by interpolating the $L^4$-Strichartz estimate on $\T^2$ (see Lemma \ref{LEM:L4}) and the trivial $L^2$-bound. In \cite{Grun}, Gr\"unrock showed the bilinear estimate \eqref{bi2} (for $\cj{u}^2$) for $s > -\frac 12$ and proved the corresponding local well-posedness result. In the same paper, he showed the failure of \eqref{bi1} (for $u^2$) on $\T^2$ when $s < 0$ and the failure of \eqref{bi2} (for $\cj{u}^2$) on $\T^2$ when $s < -\frac 12$. In \cite{Kish19}, Kishimoto showed ill-posedness of \eqref{qNLS_gen} on $\T^2$ with all types of nonlinearities for regularity ranges shown in Table \ref{table2}. In a recent work, Oh and the author \cite{LO} proved local well-posedness of \eqref{qNLS_gen} with nonlinearities $u^2$ and $|u|^2$ for $s = 0$ by establishing correponding $X^{s, b}$-bilinear estimates.

\begin{table} \centering
\arrayrulewidth = 1pt
\renewcommand{\arraystretch}{1.5}
\begin{tabular}[t]{| c | c | c | c | c | c | c |}
\hline
Setting & \multicolumn{3}{c |}{$\T$} & \multicolumn{3}{ c |}{$\T^2$} \\
\hline
Nonlinearity $\mathcal{N} (u, u)$ & $u^2$ & $\cj{u}^2$ & $|u|^2$ & $u^2$ & $\cj{u}^2$ & $|u|^2$ \\
\hline
Scaling critical regularity & \multicolumn{3}{c |}{$s_{\text{crit}} = - \frac 32$} & \multicolumn{3}{ c |}{$s_{\text{crit}} = - 1$} \\
\hline 
$X^{s, b}$-bilinear estimate & $s > - \frac 12$ & $s > - \frac 12$ & $s \geq 0$ & $s \geq 0$ & $s > - \frac 12$ & $s \geq 0$ \\
\hline
Failure of $X^{s, b}$-bilinear estimate & $s < - \frac 12$ & $s < - \frac 12$ & $s < 0$ & $s < 0$ & $s < - \frac 12$ & $s < 0$ \\
\hline
Local well-posedness & $s > - \frac 12$ & $\pmb{s > - \frac 23}$ & $s \geq 0$ & $s \geq 0$ & $\pmb{s > - \frac 23}$ & $s \geq 0$ \\
\hline
Ill-posedness & $s < -1$ & $s < -1$ & $s < 0$ & $s \leq -1$ & $s \leq -1$ & $s < 0$ \\
\hline
\end{tabular}
\vspace{8pt}
\caption{Currect results for the quadratic NLS on $\T$ and $\T^2$. The boldface texts in the table refer to the results of this paper. Note that for the nonlinearity $|u|^2$, local well-posedness and ill-posedness results are sharp on both $\T$ and $\T^2$. For nonlinearities $u^2$ and $\cj{u}^2$ on either $\T$ or $\T^2$, there are gaps between local well-posedness and ill-posedness results.}
\label{table2}
\end{table}

The long-time behaviors of the quadratic NLS \eqref{qNLS_gen} have also been studied. For global existence and scattering results, see \cite{FG, GMS, HNST, JLT, MTT, ST}. For nonexistence of non-trivial scattering solutions, see \cite{Shi, STsu}. For finite-time blowup results, see \cite{IW, Oh}.

\medskip
As can be seen from Table \ref{table2}, local well-posedness for the quadratic NLS with nonlinearity $|u|^2$ is complete, whereas for nonlinearities $u^2$ and $\cj{u}^2$, there are gaps between local well-posedness and ill-posedness results. The difference of well-posedness behaviors of these three nonlinearities is closed related to their distinct phase functions. By letting $n_1, n_2$ be the frequencies of the nonlinearity and $n$ be the frequency of the duality term, we can write out the frequency interactions and phase functions for these three nonlinearities as in Table \ref{table3}.

When the phase function is large, we expect some gain of regularities. For example, for nonlinearity $\cj{u}^2$ on $\T^2$, the phase function $|n|^2 + |n_1|^2 + |n_2|^2$ provides gain of derivatives, so that one can establish local well-posedness for nonlinearity $\cj{u}^2$ with very rough initial data. On the other hand, for nonlinearity $u^2$ on $\T^2$, the phase function $|n|^2 - |n_1|^2 - |n_2|^2 = 2 n_1 \cdot n_2$ can be very small if $n_1$ and $n_2$ are almost perpendicular to each other, so that local well-posedness with rough initial data is much harder. In this paper, we focus on shrinking the well-posedness gap for nonlinearity $\cj{u}^2$ by establishing local well-posedness with lower regularity. We also discuss some well-posedness issues for nonlinearity $u^2$ in Remark \ref{RMK:uu} below.

\begin{table} \centering
\arrayrulewidth = 1pt
\renewcommand{\arraystretch}{1.5}
\begin{tabular}[t]{| c | c | c | c |}
\hline
Nonlinearity $\mathcal{N} (u, u)$ & $u^2$ & $\cj{u}^2$ & $|u|^2$ \\
\hline
Frequency interaction & $n - n_1 - n_2 = 0$ & $n + n_1 + n_2 = 0$ & $n - n_1 + n_2 = 0$ \\
\hline 
Phase function & $|n|^2 - |n_1|^2 - |n_2|^2$ & $|n|^2 + |n_1|^2 + |n_2|^2$ & $|n|^2 - |n_1|^2 + |n_2|^2$ \\
\hline
\end{tabular}
\vspace{8pt}
\caption{Frequency interactions and phase functions for the quadratic NLS with nonlinearities $u^2$, $\cj{u}^2$, and $|u|^2$.}
\label{table3}
\end{table}

\medskip
We now look back on low regularity local well-posedness of the quadratic NLS \eqref{qNLS} on $\T$ and $\T^2$. In this paper, we prove the following theorem.

\begin{theorem}
\label{THM:LWP}
Let $\mathcal{M} = \T$ or $\T^2$. Then, the quadratic NLS \eqref{qNLS} is locally well-posed in $H^s (\mathcal{M})$ for $s > -\frac 23$. More precisely, given any $u_0 \in H^s (\mathcal{M})$, there exists $T = T(\| u_0 \|_{H^s}) > 0$ and a unique solution $u \in C([-T, T]; H^s (\mathcal{M}))$ to \eqref{qNLS} with $u|_{t = 0} = u_0$, and the solution $u$ depends continuously on the initial data $u_0$.
\end{theorem}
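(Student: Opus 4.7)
The plan is a fixed-point argument for the Duhamel formulation
\[
u(t) = e^{it\Dl} u_0 - i\int_0^t e^{i(t-t')\Dl}\, \cj{u(t')}^2\, \mathd t'
\]
on a short time interval $[-T, T]$. Since the $X^{s,1/2+}$-bilinear estimate \eqref{bi2} fails for $s < -\tfrac12$ (Table~\ref{table2}), one cannot close the iteration in $X^{s,b}$ alone. My choice of solution space would be a sum $Z^s = X^{s,1/2+\delta} + Y^s$, where $Y^s$ is an auxiliary Fourier-restriction space (for instance a $\jb{\nb}^{-s}\jb{i\dt+\Dl}^{-1}\ell^2_n L^1_\tau$-type space, or an atomic $U^2/V^2$-variant) designed to absorb precisely the configurations that obstruct the $X^{s,b}$-estimate. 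Together with its dual nonlinearity space $N^s$, one obtains the linear and Duhamel estimates $\|e^{it\Dl}u_0\|_{Z^s}\les \|u_0\|_{H^s}$ and $\|\int_0^t e^{i(t-t')\Dl} F\,\mathd t'\|_{Z^s}\les T^\theta \|F\|_{N^s}$ by routine modifications of the $X^{s,b}$-theory.

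The heart of the argument is the bilinear estimate
\[
\|\cj u\cj v\|_{N^s} \les \|u\|_{Z^s}\|v\|_{Z^s},\qquad s > -\tfrac23,
\]
which I would prove by dyadic Littlewood--Paley decomposition in both frequency and modulation, setting $|n|\sim N$, $|n_j|\sim N_j$, $\jb{\tau+|n|^2}\sim M_0$, and $\jb{\sigma_j+|n_j|^2}\sim M_j$. The decisive structural input is from Table~\ref{table3}: for the $\cj u^2$-nonlinearity the phase function is $|n|^2+|n_1|^2+|n_2|^2$, and the resonance identity on the cone $n+n_1+n_2=0$ reads $M_0+M_1+M_2 \geq |n|^2+|n_1|^2+|n_2|^2 \gtrsim N_{\max}^2$, so $\max(M_0,M_1,M_2)\gtrsim N_{\max}^2$, which is the source of all modulation gain. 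Combining this gain with the $L^3$-Strichartz estimate obtained by interpolating Lemma~\ref{LEM:L4} on $\T^2$ (or Bourgain's $L^4$-estimate on $\T$) with the trivial $L^2$-bound, together with lattice-point counting for the number of $(n_1,n_2)$ consistent with the frequency and modulation constraints, produces a dyadic bound which one sums up against the derivative weight $\jb n^s \jb{n_1}^{-s}\jb{n_2}^{-s}$.

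The main obstacle I anticipate is the high--high-to-low resonant configuration $N \ll N_1 \sim N_2 \sim N_{\max}$, where the derivative weight diverges like $N_{\max}^{-2s}$ and must be balanced against the modulation gain $N_{\max}^{2(b-1)}$ available on the output (the weakest case) or $N_{\max}^{-2b}$ on an input. A careful case analysis according to which of $M_0,M_1,M_2$ is largest, together with the $\eps$-loss of the two-dimensional $L^4$-Strichartz, should pin the critical threshold at precisely $s=-\tfrac23$; in the regimes where the $X^{s,b}$-norm of an input cannot carry its share of the weight, the auxiliary piece $Y^s$ takes over, which is exactly the role of the modification of $X^{s,b}$ advertised in the abstract. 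Once the bilinear estimate is in hand, existence, uniqueness, and continuous dependence on the initial data in $C([-T,T];H^s(\mathcal M))$ with $T=T(\|u_0\|_{H^s})$ follow from the standard contraction-mapping argument on a ball in $Z^s([-T,T])$.
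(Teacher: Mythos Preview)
Your overall strategy---modified sum space, resonance identity $\max(M_0,M_1,M_2)\gtrsim N_{\max}^2$, case analysis by which modulation is dominant, high--high-to-low as the worst interaction---matches the paper's. But there is a concrete gap: your choice $Z^s = X^{s,1/2+\delta}+Y^s$ with $b=\tfrac12+\delta$ cannot reach $s>-\tfrac23$. The paper pins down $b=\tfrac23$ precisely by balancing two obstructions. When the \emph{output} carries the large modulation (your ``$N_{\max}^{2(b-1)}$'' case, the paper's Example~\ref{EX:1}), one needs $s\ge 2b-2$. When an \emph{input} carries the large modulation and must be measured in the auxiliary space (the paper's Example~\ref{EX:2}), one needs $s\ge -b$. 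These intersect at $b=\tfrac23$, $s=-\tfrac23$; taking $b=\tfrac12+$ forces $s>-\tfrac12-$, i.e.\ no improvement over the known result. You actually state the balancing heuristic in your last paragraph, but it contradicts your earlier commitment to $b=\tfrac12+\delta$.

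The second point where your sketch is too loose to close is the structure of $Y^s$. The paper's $Y^{s,b}$ is not just an $\ell^2_nL^1_\tau$ space: it carries the weighted $L^2$ piece $\|\jb{\tau+|n|^2}^{s/2+b}\ft u\|_{\ell^2_nL^2_\tau}$, and it is this weight (with $b=\tfrac23$) that produces exactly $\|v_N\|_{Y^{s,2/3}}\sim N^{s+4/3}$ for the high-modulation input in Example~\ref{EX:2}, making the arithmetic work. A bare $\ell^2_nL^1_\tau$ norm, or a generic $U^2/V^2$ space, will not hit this exponent. The paper then splits $Z^{s,b}$ sharply by the projector onto $\{|\tau+|n|^2|\gtrless |n|^2\}$ rather than using a genuine sum-space infimum, which streamlines the case analysis. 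Finally, the bilinear proof uses the $L^4$-Strichartz estimate (Lemma~\ref{LEM:L4}) directly in the low--low modulation subcases, not an interpolated $L^3$ bound; lattice-point counting does not appear.
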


Since local well-posedness of \eqref{qNLS} for $s > -\frac 12$ was already shown in \cite{KPV96} on $\T$ and in \cite{Grun} on $\T^2$, we mainly focus on the situation when $-\frac 23 < s \leq -\frac 12$. Our proof of Theorem \ref{THM:LWP} relies on modified $X^{s,b}$-spaces for the solutions, and so the uniqueness in the above statement holds only in the relevant function space (see the $Z^{s, b}$-norm in \eqref{Zsb} and its local-in-time version in \eqref{ZsbT}). For the proof of Theorem \ref{THM:LWP}, we will mainly focus on the case $\mathcal{M} = \T^2$ (see Remark \ref{RMK:T}). The idea of the proof of Theorem \ref{THM:LWP} is to introduce modifications on the $X^{s,b}$-space which enable us to prove the corresponding bilinear estimate. See Subsection \ref{SUBSEC:BT} for more discussion on it.

Theorem \ref{THM:LWP} improves the previous local well-posedness results in \cite{Grun, KPV96}. In addition, to the best of the author's knowledge, these are the first local well-posedness results for the quadratic NLS on periodic domains below the regularity thresholds where the usual $X^{s, b}$-bilinear estimates fail. We also remark that the bound $s > -\frac 23$ is sharp (up to the endpoint regularity $s = -\frac 23$) in our approach. See Subsection \ref{SUBSEC:BT} for more details.

\begin{remark} \rm
\label{RMK:T}
In Theorem \ref{THM:LWP}, the proof for $\mathcal{M} = \T$ follows from the proof for $\mathcal{M} = \T^2$ with minor modifications. Thus, in proving Theorem \ref{THM:LWP}, we mainly restrict our attention on the case $\mathcal{M} = \T^2$.
\end{remark}

\subsection{Modified function spaces}
\label{SUBSEC:BT}

In this subsection, we briefly explain our strategy for proving Theorem \ref{THM:LWP}.

In \cite{BT}, Bejenaru-Tao reduced the well-posedness problem of the quadratic NLS \eqref{qNLS_gen} in $H^s (\R^d)$ or $H^s (\T^d)$ to finding a space-time norm $\| \cdot \|_{W^s}$ that satisfy the following properties:\footnote{On $\T^d$, this framework works only for local well-posedness for small initial data. See Remark \ref{RMK:scaling} for a discussion of local well-posedness on periodic domains for large initial data.}

\smallskip \noi
(i) (Monotonicity) If $|\ft{f}| \leq |\ft{g}|$ pointwise, then 
\begin{align}
\| f \|_{W^s} \leq \| g \|_{W^s}.
\label{mono_gen}
\end{align}

\noi 
Here, $\ft f$ is the space-time Fourier transform of $f$.

\smallskip \noi
(ii) ($H^s$-energy estimate) The following inequality holds:
\begin{align}
\big\| \jb{\xi}^s \ft{f} (\xi, \tau) \big\|_{L_\xi^2 L_\tau^1} \les \| f \|_{W^s},
\label{Hs_gen}
\end{align}

\noi
where $\jb{\cdot} = (1 + |\cdot|^2)^{\frac 12}$.

\smallskip \noi
(iii) (Homogeneous linear estimate) There exists $b \in \R$ such that
\begin{align}
\| f \|_{W^s} \les \| f \|_{X^{s, b}},
\label{lin_gen}
\end{align}

\noi
where the $X^{s, b}$-norm is as defined in \eqref{Xsb}.

\smallskip \noi
(iv) (Bilinear estimate) The following inequality holds:
\begin{align}
\big\| \jb{\tau + |\xi|^2}^{-1} \mathcal{B} (\ft{f}, \ft{g}) \big\|_{\ft{W}^s} \les \| f \|_{W^s} \| g \|_{W^s},
\label{bilin_gen}
\end{align}

\noi
where $\ft{W}^s$ is the same norm $W^s$ on the Fourier side and $\mathcal{B} (f, g)$ is equal to $f * g$ (if $\mathcal{N} (u, u) = u^2$), $\cj{\wt f} * \cj{\wt g}$ (if $\mathcal{N} (u, u) = \cj{u}^2$), or $f * \cj{\wt g}$ (if $\mathcal{N} (u, u) = |u|^2$). Here, $\wt f (\xi, \tau) = f (- \xi, -\tau)$.

\smallskip
Now the task is to find suitable function spaces that satisfy the properties listed above. From now on, we restrict our attention to the nonlinearity $\mathcal{N} (u, u) = \cj{u}^2$ and the domain $\T^2$. As we have seen in the previous subsection, the usual $X^{s, b}$-bilinear estimate fails when the regularity is {\it very} low. This failure is caused by certain ``dangerous'' interactions. Thus, we need to introduce modifications on the $X^{s, b}$-space in order to reduce the effect by those ``dangerous'' interactions. In the following, we discuss several examples of such interactions and our strategy to deal with them. 

\begin{example} \rm
\label{EX:1}
For a large number $N \in \mathbb{N}$, let 
\begin{align*}
\ft{u_N} (n, \tau) &= \ind_{\{ n = N e_1 \}} \ind_{[-1, 1]} (\tau + N^2), \\
\ft{v_N} (n, \tau) &= \ind_{\{ n = - N e_1 \}} \ind_{[-1, 1]} (\tau + N^2),
\end{align*}

\noi
where $e_1 = (1, 0)$. Note that $\| u_N \|_{X^{s, b}} \sim N^s$ and $\| v_N \|_{X^{s, b}} \sim N^s$. A direct computation yields
\begin{align*}
\ft{\cj{u_N} \cj{v_N}}  (n, \tau) \geq \ind_{\{n = 0\}} \ind_{[-1, 1]} (- \tau + 2N^2),
\end{align*}

\noi
and so $\| \cj{u_N} \cj{v_N} \|_{X^{s, b - 1}} \ges N^{2b - 2}$. Thus, the bilinear estimate \eqref{bi2} holds only if $2b - 2 \leq 2s$ or $s \geq b - 1$. Since we need $b > \tfrac 12$, we require that $s > -\tfrac 12$.
\end{example}

In the above example, the frequency interaction is ``high-high to low'' and the modulation interaction is ``low-low to high''. However, the modulation for $\ft{\cj{u_N} \cj{v_N}}$ is not high enough for the desired $X^{s,b}$-bilinear estimate when $s \leq -\tfrac 12$. To control the above interaction when $s \leq - \tfrac 12$, we consider the following $Y^{s, b}$-norm introduced by Kishimoto \cite{Kish09}:
\begin{align*}
\| u \|_{Y^{s, b}} := \big\| \jb{n}^s \ft u (n, \tau) \big\|_{\l_n^2 L_\tau^1 (\Z^2 \times \R)} + \big\| \jb{\tau + |n|^2}^{\frac{s}{2} + b} \ft u (n, \tau) \big\|_{\l_n^2 L_\tau^2 (\Z^2 \times \R)},
\end{align*}

\noi
and we define the space $Z^{s, b} = X^{s, b} + Y^{s, b}$ via the norm
\begin{align*}
\| u \|_{Z^{s, b}} := \inf \{ \| u_1 \|_{X^{s, b}} + \| u_2 \|_{Y^{s, b}}: u_1 + u_2 = u \}.
\end{align*}

\noi
The $\l_n^2 L_\tau^1$-term in the $Y^{s, b}$-norm is needed to ensure that the $Z^{s, b}$-norm satisfies the $H^s$-energy estimate \eqref{Hs_gen}. It is not hard to check that the $Z^{s, b}$-norm satisfies the monotonicity property \eqref{mono_gen}, the $H^s$-energy estimate \eqref{Hs_gen}, and the homogeneous linear estimate \eqref{lin_gen}. Note that for $s \leq 0$ and $b > \frac 12$, if $\supp \ft u \subset \{ |\tau + |n|^2| \les |n|^2 \}$, then we have 
\begin{align*}
\| u \|_{Z^{s, b}} \sim \| u \|_{X^{s, b}} \les \| u \|_{Y^{s, b}};
\end{align*} 

\noi
if $\supp \ft u \subset \{ |\tau + |n|^2| \ges |n|^2 \}$, then we have 
\begin{align*}
\| u \|_{Z^{s, b}} \sim \| u \|_{Y^{s, b}} \les \| u \|_{X^{s, b}}.
\end{align*} 

\noi
In Section \ref{SUBSEC:Zsb}, we will revisit this $Z^{s, b}$-norm, which will be defined in a more precise manner for practical purposes.

In Example \ref{EX:1}, because of the high modulation of $\ft{\cj{u_N} \cj{v_N}}$, the $\ft{Z}^{s, b}$-norm (i.e.~the $Z^{s, b}$-norm on the Fourier side) of $\jb{\tau + |n|^2}^{-1} \ft{\cj{u_N} \cj{v_N}}$ is small enough to obtain the desired bilinear estimate \eqref{bilin_gen} for $s \leq - \frac 12$. One can easily check that using the $Z^{s, b}$-norm, the bilinear estimate for the above example holds for $s \geq 2b - 2$. This is better than $s > -\tfrac 12$ as long as $\tfrac 12 < b \leq \tfrac 34$.

Let us take a look at another example using the $Z^{s, b}$-norm assuming that $s \leq 0$.

\begin{example} \rm
\label{EX:2}
For a large number $N \in \mathbb{N}$, let 
\begin{align*}
\ft{u_N} (n, \tau) &= \ind_{\{n = N e_1\}} \ind_{[-1, 1]} (\tau + N^2), \\
\ft{v_N} (n, \tau) &= \ind_{\{n = - N e_1\}} \ind_{[-1, 1]} (-\tau + N^2).
\end{align*}

\noi
A direct computation yields
\begin{align*}
\ft{\cj{u_N} \cj{v_N}} (n, \tau) \geq \ind_{\{n = 0\}} \ind_{[-1, 1]} (\tau).
\end{align*}

\noi
Note that in this example, the frequency interaction is ``high-high to low'' and the modulation interaction is ``low-high to low''.
We can compute their corresponding $Z^{s,b}$-norms as follows: 
\begin{align*}
&\| u_N \|_{Z^{s, b}} \sim \| u_N \|_{X^{s, b}} \sim N^s \\ 
&\| v_N \|_{Z^{s, b}} \sim \| v_N \|_{Y^{s, b}} \sim N^{s + 2b} \\
&\big\| \jb{\tau + |n|^2}^{-1} \ft{\cj{u_N} \cj{v_N}} \big\|_{\ft{Z}^{s, b}} \ges 1.
\end{align*}

\noi
Thus, the bilinear estimate \eqref{bilin_gen} with $W = Z^{s, b}$ holds only if $0 \leq 2s + 2b$ or $s \geq -b$.
\end{example}

Combining Example \ref{EX:1} and Example \ref{EX:2}, we notice that the regularity $s$ needs to satisfy $s \geq 2b - 2$ and $s \geq -b$. These two lower bounds become optimal when $b = \frac 23$, so that $s = - \frac 23$ seems to be the threshold of the bilinear estimate \eqref{bilin_gen} with $W^s = Z^{s, b}$. In fact, we will show in Section \ref{SEC:bilin} that the bilinear estimate \eqref{bilin_gen} with $W^s = Z^{s, \frac 23}$ holds when $s > - \frac 23$ (see Remark \ref{RMK:besov} for a discussion on the slight loss of regularity). See Section \ref{SEC:bilin} for more details.

We conclude this introduction by stating several remarks.

\begin{remark} \rm
\label{RMK:scaling}
On $\T^d$, it is possible to use a scaling argument to prove local well-posedness for large initial data given that one can first obtain small data local well-posedness. See \cite{CKSTT04}. However, we do not pursue the scaling argument in this paper and instead rely on the time localization (Lemma \ref{LEM:time}) to prove local well-posedness for large initial data.
\end{remark}

\begin{remark} \rm
\label{RMK:besov}
In \cite{BT, Kish08}, a Besov refinement was considered in constructing desired function spaces so that the endpoint regularity (i.e.~$s = -1$ for the quadratic NLS \eqref{qNLS_gen} on $\R$ with $\mathcal{N}(u, u) = u^2$ or $\cj{u}^2$) can be handled. Similar Besov refinements were used by \cite{Guo, Kish09k} in the context of the Korteweg-de Vris equation.

For the quadratic NLS \eqref{qNLS} on $\T^2$, however, such Besov modification does not seem to be enough to cover the case when $s = - \frac 23$. This is mainly due to the fact that our approach relies heavily on the $L^4$-Strichartz estimate on $\T^2$ (see Lemma \ref{LEM:L4}), which has an $\eps$ loss of derivative.

For the quadratic NLS \eqref{qNLS} on $\T$, since the $L^4$-Strichartz estimate on $\T$ (see \cite{Bour93}) does not have any derivative loss, it seems possible to adapt the Besov modification to our estimate so that the endpoint case can be included.
\end{remark}

\begin{remark} \rm
\label{RMK:gap}
For the quadratic NLS \eqref{qNLS} on $\T$ and $\T^2$, there are still gaps between local well-posedness and ill-posedness results (see Table \ref{table2}). Specifically, on $\T$, well-posedness issues of \eqref{qNLS} for $-1 \leq s \leq - \frac 23$ remain open; on $\T^2$, well-posedness issues of \eqref{qNLS} for $-1 < s \leq - \frac 23$ remain open. One possible strategy for improving our local well-posedness arguments is to introduce weighted spaces as in \cite{BS, BT, Kish08, Kish09} in the context of Euclidean spaces.
\end{remark}

\begin{remark} \rm
\label{RMK:uu}
Let us consider the quadratic NLS \eqref{qNLS_gen} with $\mathcal{N} (u, u) = u^2$. On $\T$, local well-posedness is known to hold for $s > -\frac 12$ and ill-posedness holds for $s < -1$. We believe that the method of using modified function spaces should be able to produce better local well-posedness results, but one may need to use the weighted spaces as in \cite{BS, BT} to handle the corresponding bilinear estimate.

For the quadratic NLS \eqref{qNLS_gen} with $\mathcal{N} (u, u) = u^2$ on $\T^2$, local well-posedness is known to hold for $s \geq 0$ and ill-posedness holds for $s \leq -1$. However, it seems unlikely that the method of finding modified function spaces as illustrated at the beginning of Subsection \ref{SUBSEC:BT} works in the range $s < 0$. This is due to the following example in \cite{Grun}. For a large number $N \in \N$, let
\begin{align*}
\ft{u_N} (n, \tau) = \ind_{\{n = N e_1\}} \ind_{[-1, 1]} (\tau + N^2), \\
\ft{v_N} (n, \tau) = \ind_{\{n = N e_2\}} \ind_{[-1, 1]} (\tau + N^2),
\end{align*}

\noi
where $e_1 = (1, 0)$ and $e_2 = (0, 1)$. A direct computation yields
\begin{align*}
\ft{u_N v_N} (n, \tau) &= \ind_{\{n = N (e_1 + e_2) \}} \max \big\{ 0, \min \{ 2 - \tau - 2 N^2, 2 + \tau + 2 N^2 \} \big\} \\
&\geq \ind_{\{n = N (e_1 + e_2)\}} \ind_{[-1, 1]} (\tau + 2N^2).
\end{align*}

\noi
In this example, the frequency interaction is ``high-high to high'' and the modulation interaction is ``low-low to low'', which means that there seems to be no way to utilize the modulation to improve the bilinear estimate. Note that this ``low-low to low'' interaction does not occur for the nonlinearity $\mathcal{N} (u, u) = \cj{u}^2$, which can be seen from the computations at the beginning of Subcase 2.3 of Lemma \ref{LEM:HL} below and Case 3 of Lemma \ref{LEM:HH} below.

For any $s \in \R$ and $b \in \R$, we have
\begin{align*}
\| u_N \|_{X^{s, b}} \sim \| v_N \|_{X^{s, b}} \sim \| \jb{\tau + |n|^2}^{-1} \ft{u_N v_N} \|_{\ft{X}^{s, b}} \sim N^s,
\end{align*} 

\noi
where the $\ft{X}^{s, b}$-norm is the $X^{s, b}$-norm on the Fourier side. Thus, we observe that due to the homogeneous linear estimate \eqref{lin_gen} and the similar structures of $\ft{u_N}$, $\ft{v_N}$, and $\ft{u_N v_N}$, any qualified modified norm $\| \cdot \|_{W^s}$ should decrease the corresponding norms of $\ft{u_N}$, $\ft{v_N}$, and $\jb{\tau + |n|^2}^{-1} \ft{u_N v_N}$ with the same rate (with respect to $N$). Suppose that there exists $a \geq 0$ such that
\begin{align*}
\| u_N \|_{W^s} \sim \| v_N \|_{W^s} \sim \| \jb{\tau + |n|^2}^{-1} \ft{u_N v_N} \|_{\ft{W}^s} \sim N^{s - a}.
\end{align*}

\noi
where the $\ft{W}^s$-norm is the $W^s$-norm on the Fourier side. Then, for the bilinear estimate \eqref{bilin_gen} to hold, we must have
\begin{align*}
N^{s - a} \les N^{2s - 2a},
\end{align*}

\noi
so that $s - a \geq 0$ or $s \geq a \geq 0$. Therefore, we do not expect that the method of finding the $W^s$-norm for proving local well-posedness works for the quadratic NLS \eqref{qNLS_gen} with $\mathcal{N} (u, u) = u^2$ on $\T^2$ for $s < 0$, and it is possible that some ill-posedness results may hold in this range.
\end{remark}

\section{Notations and function spaces}
\label{SEC:func}

In this section, we introduce some notations and function spaces that enable us to prove local well-posedness of \eqref{qNLS} in low regularity settings. 

\subsection{Notations}
Throughout this paper, we drop the inessential factor of $2\pi$. For a space-time distribution $u$, we write $\ft u$ or $\F_{x, t} u$ to denote the space-time Fourier transform of $u$. If a function $\phi$ only has a space (or time) variable, then we use $\ft \phi$ to denote the Fourier transform of $\phi$ with respect to the space (or time, respectively) variable. For any function $f$, the function $\wt{f}$ is the reflection of $f$, i.e. $\wt{f}(x) = f(-x)$. We also set $\jb{\,\cdot\,} = (1 + |\cdot|^2)^\frac{1}{2}$.


We use $A \les B$ to denote $A \leq CB$ for some constant $C > 0$. We write $A \sim B$ if we have $A \les B$ and $B \les A$. We may use subscripts to denote dependence on external parameters. We also use $a +$ and $a -$ to denote $a + \eps$ and $a - \eps$, respectively, for sufficiently small $\eps > 0$.

Given a dyadic number $N \in 2^{\N \cup \{0\}}$, if $N \geq 2$, we let $P_N$ be the spatial frequency projector onto the frequencies 
\begin{align*}
\Pf_N := \{ (n, \tau) \in \Z^2 \times \R: \tfrac 12 N < |n| \leq N \}.
\end{align*}

\noi
If $N = 1$, we let $P_1$ be the spatial frequency projector onto the frequencies
\begin{align*}
\Pf_1 := \{ (n, \tau) \in \Z^2 \times \R : |n|  \leq 1 \}.
\end{align*}

\noi
For a space-time distribution $u$, we also write $u_N := P_N u$ for simplicity.

\subsection{Fourier restriction norm method}
\label{SUBSEC:Xsb}
In this subsection, we recall the definition and estimates of $X^{s, b}$-spaces for the Schr\"odinger equations, which were first introduced by Bourgain \cite{Bour93}. Given $s, b \in \R$, we define the space $X^{s, b} = X^{s, b} (\T^2 \times \R)$ to be the completion of functions that are smooth in space and Schwartz in time with respect to the following norm:
\begin{align}
\| u \|_{X^{s, b}} := \big\| \jb{n}^s \jb{\tau + |n|^2}^b \ft u (n, \tau) \big\|_{\l_n^2 L_\tau^2 (\Z^2 \times \R)}.
\label{Xsb}
\end{align}



We now present and recall some estimates related to $X^{s, b}$-norms, starting with the following stronger version of the usual homogeneous linear estimate of the $X^{s, b}$-norm as in \cite{Bour93, Tao}.
\begin{lemma}
\label{LEM:lin_tk}
Let $\varphi$ be a smooth function supported on $[-2, 2]$. Let $s \in \R$, $b \leq 1$, and $k \in \N \cup \{0\}$. Then, we have
\begin{align}
\big\| t^k \varphi(t) e^{it \Dl} \phi \big\|_{X^{s, b}} \les_{\varphi} 3^k \| \phi \|_{H^s (\T^2)}
\label{lin_tk}
\end{align}
\end{lemma}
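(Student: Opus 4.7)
The plan is to reduce the bound to a one-dimensional Sobolev computation in time. The key ingredient is the standard isometric identity
\[
\|u\|_{X^{s,b}} = \| e^{-it\Dl} u \|_{H^b_t H^s_x(\R \times \T^2)},
\]
which follows directly from the definition \eqref{Xsb} by Plancherel in $t$ together with the substitution $\tau \mapsto \tau - |n|^2$. Applied to $u(t,x) = t^k \varphi(t) e^{it\Dl}\phi(x)$, the group acts only in the spatial variable and the propagator is undone at each fixed $t$, yielding the tensor-product form
\[
e^{-it\Dl} u(t,x) = t^k \varphi(t) \, \phi(x).
\]
Hence the norm separates as
\[
\|u\|_{X^{s,b}} = \|t^k \varphi\|_{H^b_t(\R)} \cdot \|\phi\|_{H^s(\T^2)},
\]
so it suffices to control the purely temporal factor $\|t^k \varphi\|_{H^b_t(\R)}$.

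For this, since $b \leq 1$ implies $\jb{\tau}^b \leq \jb{\tau}$ pointwise in $\tau$, I would first upgrade to the endpoint $\|t^k \varphi\|_{H^b_t} \leq \|t^k \varphi\|_{H^1_t}$ and then compute directly:
\[
\|t^k \varphi\|_{H^1_t}^2 \sim \|t^k \varphi\|_{L^2}^2 + \|k\, t^{k-1}\varphi + t^k \varphi'\|_{L^2}^2.
\]
Using $\supp \varphi \subset [-2,2]$, so that $|t|^k \leq 2^k$ on the support, each term on the right is bounded by $C_\varphi 4^k(1+k^2)$, giving $\|t^k \varphi\|_{H^1_t} \leq C_\varphi 2^k \sqrt{1+k^2}$. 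An elementary check (the ratio $(2/3)^k \sqrt{1+k^2}$ attains its maximum at $k = 0$) shows $2^k \sqrt{1+k^2} \leq 3^k$ for all $k \in \N \cup \{0\}$, which conveniently absorbs the polynomial prefactor into the extra base $(3/2)^k$. Combining the two displays yields the claimed bound $\les_\varphi 3^k \|\phi\|_{H^s}$.

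The argument is essentially routine; there is no genuine obstacle. The only item worth remarking on is the exponent $3^k$ in the statement, which is slightly wasteful: the natural bound is $2^k \sqrt{1+k^2}$, with the $2^k$ coming from $|t|^k$ on the support of $\varphi$ and the polynomial factor coming from differentiating $t^k$; the base is enlarged to $3$ purely so that no polynomial prefactor in $k$ needs to be carried in later applications of the lemma.
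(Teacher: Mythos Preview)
Your proof is correct and follows essentially the same route as the paper's: both reduce via the $X^{s,b}$ isometry to the factorization $\|t^k\varphi\|_{H^b_t(\R)}\|\phi\|_{H^s(\T^2)}$, use $b\le 1$ to pass to $H^1_t$, and then exploit $\supp\varphi\subset[-2,2]$ to bound the temporal factor by $C_\varphi\,2^k$ times a polynomial in $k$, which is absorbed into $3^k$. The only cosmetic difference is that the paper writes the bound as $(2^k + k\,2^{k-1})$ while you package it as $2^k\sqrt{1+k^2}$.
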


\begin{proof}
Note that by the fact that $b \leq 1$, we have
\begin{align*}
\big\| t^k \varphi(t) e^{it \Dl} \phi \big\|_{X^{s, b}} &= \Big\| \ft{(\cdot)^k \varphi} (\tau + |n|^2) \jb{\tau + |n|^2}^b \jb{n}^s \ft \phi (n) \Big\|_{\l_n^2 L_\tau^2} \\
&= \big\| t^k \varphi (t) \big\|_{H^b (\R)} \| \phi \|_{H^s (\T^2)} \\
&\les \big( \big\| t^k \varphi (t) \big\|_{L^2 (\R)} + \big\| \dt \big( t^k \varphi (t) \big) \big\|_{L^2 (\R)} \big) \| \phi \|_{H^s (\T^2)} \\
&\les (2^k + k 2^{k - 1} ) (\| \varphi \|_{L^2 (\R)} + \| \dt \varphi \|_{L^2 (\R)}) \| \phi \|_{H^s (\T^2)} \\
&\les_\varphi 3^k \| \phi \|_{H^s (\T^2)},
\end{align*}

\noi
as desired.
\end{proof}

\begin{remark} \rm
In fact, the estimate \eqref{lin_tk} holds for all $b \in \R$. For the proof of our local well-posedness result, however, we will only need the estimate \eqref{lin_tk} for $b \leq 1$.
\end{remark}

Next, we recall the following time localization estimate. For a proof, see \cite{Bour93, Tao}.
\begin{lemma}
\label{LEM:time}
Let $s \in \R$, $-\frac 12 < b_1 \leq b_2 < \frac 12$, and $0 < T \leq 1$. Let $\varphi$ be a Schwartz function and let $\varphi_T (t) := \varphi (t / T)$. Then, we have
\begin{align*}
\| \varphi_T u \|_{X^{s, b_1}} \les_\varphi T^{b_2 - b_1} \| u \|_{X^{s, b_2}}.
\end{align*}
\end{lemma}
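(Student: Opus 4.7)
The plan is to reduce the estimate, through Plancherel and the $X^{s,b}$ conjugation trick, to a purely one-dimensional time estimate, which can then be handled by H\"older, Sobolev embedding, and duality. First, since the spatial weight $\jb{n}^s$ commutes with multiplication by the scalar time cutoff $\varphi_T(t)$, a trivial Plancherel reduction lets me assume $s=0$. Using the standard conjugation identity $\|u\|_{X^{0,b}} = \|e^{-it\Dl}u\|_{L^2_xH^b_t}$ (verified by the substitution $\tau\mapsto\tau-|n|^2$ on the Fourier side) and the fact that multiplication by $\varphi_T(t)$ commutes with the spatial propagator $e^{-it\Dl}$, the claim reduces to showing, for a.e.\ $x\in\T^2$,
\begin{equation*}
\|\varphi_T\, w(x,\cdot)\|_{H^{b_1}(\R)} \les_\varphi T^{b_2-b_1}\|w(x,\cdot)\|_{H^{b_2}(\R)}
\end{equation*}
with $w=e^{-it\Dl}u$, after which squaring and integrating in $x$ finishes the reduction. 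Hence it suffices to prove the one-dimensional statement $\|\varphi_T g\|_{H^{b_1}(\R)}\les_\varphi T^{b_2-b_1}\|g\|_{H^{b_2}(\R)}$, uniformly for $-\tfrac12 < b_1 \le b_2 < \tfrac12$ and $0<T\le 1$.

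To prove this 1D inequality I would establish two endpoint families and then interpolate. For the first endpoint $(b_1,b_2)=(0,b)$ with $0< b<\tfrac12$, H\"older in time gives
\begin{equation*}
\|\varphi_T g\|_{L^2(\R)}\le \|\varphi_T\|_{L^{1/b}(\R)}\,\|g\|_{L^{2/(1-2b)}(\R)},
\end{equation*}
and the scaling identity $\|\varphi_T\|_{L^{1/b}} = T^{b}\|\varphi\|_{L^{1/b}}$ combined with the 1D Sobolev embedding $H^{b}(\R)\hookrightarrow L^{2/(1-2b)}(\R)$ (valid precisely because $b<\tfrac12$) closes the estimate with the correct factor $T^{b}$. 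For the dual endpoint $(b_1,b_2)=(-b,0)$ with $0<b<\tfrac12$, I use duality: for any $h$ with $\|h\|_{H^b}\le 1$,
\begin{equation*}
|\langle \varphi_T g,h\rangle| = |\langle g,\overline{\varphi_T}h\rangle| \le \|g\|_{L^2}\,\|\overline{\varphi_T}h\|_{L^2} \les_\varphi T^{b}\|g\|_{L^2},
\end{equation*}
where the last step applies the first endpoint to $h$ with $\overline{\varphi}$ in place of $\varphi$ (still Schwartz), yielding $\|\varphi_T g\|_{H^{-b}}\les_\varphi T^b\|g\|_{L^2}$. Complex interpolation between these two endpoint families then covers every admissible pair $(b_1,b_2)$ with $-\tfrac12<b_1\le b_2<\tfrac12$, producing the exponent $b_2-b_1$ on $T$ via linearity of the exponent in the interpolation parameter.

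The main obstacle, and the place where the hypotheses are sharp, is the requirement $|b_i|<\tfrac12$. If $b_2\ge\tfrac12$, the 1D Sobolev embedding into $L^{2/(1-2b_2)}$ fails; dually, $b_1\le-\tfrac12$ breaks the second endpoint. More conceptually, for $|b_i|\ge\tfrac12$ one would be forced to differentiate $\varphi_T$ when estimating $H^{b_i}$-norms, producing factors of $T^{-1}$ that destroy the claimed scaling $T^{b_2-b_1}$. The assumption $T\le 1$ is mild and is used only to ensure that the various $L^p$-norms of $\varphi_T$ behave like the stated powers of $T$ uniformly (so that interpolation constants do not depend on $T$). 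No machinery beyond H\"older, Sobolev embedding, duality, and complex interpolation is required, which is why the result is classical and attributable to \cite{Bour93, Tao}.
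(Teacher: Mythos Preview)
The paper itself does not give a proof of this lemma; it simply refers to \cite{Bour93, Tao}. Your reduction to the one-dimensional multiplier estimate via the conjugation identity is the standard route and is carried out correctly, and your two endpoint families
\[
\text{(A)}\quad \|\varphi_T g\|_{L^2}\les_\varphi T^{b}\|g\|_{H^b},\qquad
\text{(B)}\quad \|\varphi_T g\|_{H^{-b}}\les_\varphi T^b\|g\|_{L^2},\qquad 0\le b<\tfrac12,
\]
are both valid.

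There is, however, a genuine gap in the interpolation step. Complex interpolation of the fixed linear operator $g\mapsto\varphi_T g$ between an instance of (A) (source $H^\beta$, target $L^2$) and an instance of (B) (source $L^2$, target $H^{-\gamma}$) produces only
\[
H^{(1-\theta)\beta}\longrightarrow H^{-\theta\gamma},\qquad \text{norm }\les T^{(1-\theta)\beta+\theta\gamma},
\]
so that necessarily $b_1=-\theta\gamma\le 0\le (1-\theta)\beta=b_2$. The regimes $0<b_1\le b_2<\tfrac12$ and (by duality) $-\tfrac12<b_1\le b_2<0$ are not reached this way, and the paper does need them: for instance, in Subcase~1.3 of Lemma~\ref{LEM:HL} the time-localization lemma is invoked with $b_1=\tfrac12-\eps$ and $b_2=\tfrac12-\tfrac{\eps}{2}$, both strictly positive. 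The missing ingredient is the diagonal bound
\[
\|\varphi_T g\|_{H^{c}}\les_\varphi \|g\|_{H^{c}}\quad\text{uniformly in }0<T\le 1,\ \text{for each fixed }|c|<\tfrac12,
\]
which does \emph{not} follow from (A) and (B) by interpolation. Once this diagonal estimate is available, interpolating it against (A) (respectively (B)) covers the remaining regions with the correct power $T^{b_2-b_1}$. The diagonal bound is the classical fact that multiplication by a sharp cutoff $\mathbf 1_{[a,\infty)}$ is bounded on $H^{c}(\R)$ for $|c|<\tfrac12$, uniformly in $a$; writing a Schwartz $\varphi$ as a superposition of such cutoffs via $\varphi(t)=-\int_{\R}\varphi'(s)\mathbf 1_{(-\infty,s]}(t)\,ds$ transfers this to $\varphi_T$ with constant $\|\varphi'\|_{L^1}$, independent of $T$. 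This is exactly the place where the restriction $|b_i|<\tfrac12$ is sharp, in line with your closing remarks.
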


We also record the following $L^4$-Strichartz estimate on $\T^2$. For a proof, see \cite{Bour93, Bour95}.
\begin{lemma}
\label{LEM:L4}
Let $N$ be a dyadic number. Then, we have
\begin{align*}
\| u_N \|_{L_t^4([-1, 1]; L_x^4(\T^2))} \les N^{s} \| u_N \|_{X^{0, b}},
\end{align*}

\noi
where $0 < s < \frac 12$ and $b > \frac{1 - s}{2}$.
\end{lemma}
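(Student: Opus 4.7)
The plan is to dyadically decompose $u_N = \sum_L u_{N, L}$ in modulation $L \sim \jb{\tau + |n|^2}$, where $u_{N, L}$ denotes the piece of $u_N$ with modulation $\sim L$, estimate each $u_{N, L}$ in $L^4_{t, x}$ by two independent methods, take a geometric mean to balance the $N$- and $L$-losses, and sum over dyadic $L$ by Cauchy--Schwarz. A smooth time cutoff handled by Lemma~\ref{LEM:time} reduces the problem to functions supported in $t \in [-2, 2]$. The first building block is a crude \emph{Bernstein-type bound} $\| u_{N, L} \|_{L^4_{t, x}} \les N^{1/2} L^{1/4} \| u_{N, L} \|_{L^2_{t, x}}$, which follows from the general principle $\| f \|_{L^\infty_{t, x}} \les |\supp \ft f|^{1/2} \| f \|_{L^2_{t, x}}$ applied to $u_{N, L}$, whose space-time Fourier support has counting$\,\times\,$Lebesgue volume $\sim N^2 L$, combined with the interpolation $L^4 \hookrightarrow L^2 \cdot L^\infty$.

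The second building block is the \emph{Bourgain $L^4$-Strichartz estimate} with an $\eps$-loss, $\| u_{N, L} \|_{L^4_{t, x}} \les N^{\eps} L^{1/2} \| u_{N, L} \|_{L^2_{t, x}}$ for any $\eps > 0$. Its linear version $\| e^{it \Dl} P_N \phi \|_{L^4_{t, x}([-1, 1] \times \T^2)} \les N^\eps \| \phi \|_{L^2}$ is proved by $T^*T$ and Plancherel, reducing to counting quadruples $(n_1, n_2) \in \Z^2 \times \Z^2$ with $n_1 + n_2 = n$ and $|n_1|^2 + |n_2|^2 = m$, which is controlled by the divisor bound $d(m) \les m^\eps \les N^{2\eps}$. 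A dyadic transfer principle, writing $u_{N, L}(x, t) = \int_{\sigma \sim L} e^{it \sigma} e^{it \Dl} \phi_\sigma (x) \, d\sigma$ via the substitution $\sigma = \tau + |n|^2$ and then applying Minkowski and Cauchy--Schwarz in $\sigma$, introduces the $L^{1/2}$ factor.

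Taking the geometric mean of the two bounds with weight $\theta = 2s \in (0, 1)$ gives $\| u_{N, L} \|_{L^4_{t, x}} \les N^{s + \eps(1 - 2s)} L^{(1 - s)/2} \| u_{N, L} \|_{L^2_{t, x}}$. Summing over dyadic $L$ via the triangle inequality and Cauchy--Schwarz, using the $L^2_{t, x}$-orthogonality of the modulation pieces, yields
\begin{align*}
\| u_N \|_{L^4_{t, x}} \les N^{s + O(\eps)} \bigg( \sum_L L^{1 - s - 2b} \bigg)^{1/2} \| u_N \|_{X^{0, b}},
\end{align*}
and the geometric series converges precisely when $b > (1 - s)/2$. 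Choosing $\eps$ small enough that $s + O(\eps)$ can be absorbed (after a harmless redefinition of $s$ within the open range $(0, 1/2)$) completes the proof.

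The main technical obstacle is the second building block: unlike on $\R^2$, the $L^4$-Strichartz estimate on $\T^2$ without any derivative loss fails, and the $N^\eps$-loss coming from the divisor bound is essentially sharp. This is precisely why the lemma is stated with $s > 0$ rather than $s \geq 0$, and, as already flagged in Remark~\ref{RMK:besov}, this same $\eps$-loss is what ultimately obstructs reaching the endpoint regularity $s = -\tfrac 23$ in Theorem~\ref{THM:LWP}.
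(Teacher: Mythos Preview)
Your argument is correct and is essentially the standard proof: dyadic decomposition in modulation, interpolation between a Bernstein-type bound and Bourgain's $L^4$-Strichartz estimate with $N^\eps$-loss, then Cauchy--Schwarz summation over modulation blocks. The paper does not give its own proof of this lemma and simply cites \cite{Bour93, Bour95}, where precisely this argument originates; one cosmetic point is that Lemma~\ref{LEM:time} as stated requires $|b|<\tfrac12$, so for the time-cutoff reduction you should instead appeal to the standard fact that multiplication by a fixed smooth cutoff is bounded on $X^{0,b}$ for all $b$ (or simply note that neither building block actually requires the cutoff).
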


\subsection{Modified function spaces}
\label{SUBSEC:Zsb}
In this subsection, we define our solution space for the quadratic NLS \eqref{qNLS} in the low regularity setting and establish corresponding linear estimates. 

Given $s, b \in \R$, we define the space $Y^{s, b} = Y^{s, b} (\T^2 \times \R)$ to be the completion of functions that are smooth in space and Schwartz in time with respect to the norm
\begin{align}
\begin{split}
\| u \|_{Y^{s, b}} &:= \big\| \jb{n}^s \ft u (n, \tau) \big\|_{\l_n^2 L_\tau^1 (\Z^2 \times \R)} + \big\| \jb{\tau + |n|^2}^{\frac{s}{2} + b} \ft u (n, \tau) \big\|_{\l_n^2 L_\tau^2 (\Z^2 \times \R)}.
\end{split}
\label{Ysb}
\end{align}

\noi
The idea of this modification comes from Kishimoto \cite{Kish09}.


We now define the space $Z^{s, b}$ via the norm
\begin{align}
\| u \|_{Z^{s, b}} := \| P_{\text{lo}} u \|_{X^{s, b}} + \| P_{\text{hi}} u \|_{Y^{s, b}},
\label{Zsb}
\end{align}


\noi
where $P_{\text{lo}}$ is the space-time frequency projector onto the frequencies $\{ |\tau + |n|^2| < 2^{-10} |n|^2 \}$ and $P_{\text{hi}}$ is the space-time frequency projector onto the frequencies $\{ |\tau + |n|^2| \geq 2^{-10} |n|^2 \}$. From the definition, we observe that the $Z^{s, b}$-norm has the monotonicity property: if $|\ft{u_1}| \leq |\ft{u_2}|$ pointwise, then
\begin{align}
\| u_1 \|_{Z^{s, b}} \leq \| u_2 \|_{Z^{s, b}}.
\label{mono}
\end{align}

\noi
For $T > 0$, we define the space $Z_T^{s, b}$ as the restriction of the $Z^{s, b}$-space onto the time interval $[-T, T]$ via the norm:
\begin{align}
\| u \|_{Z_T^{s, b}} := \inf \big\{ \| v \|_{Z^{s, b}} : v |_{[-T, T]} = u \big\}.
\label{ZsbT}
\end{align}

\noi
Note that the $Z_T^{s, b}$-space is complete.

For convenience and conciseness, later on we may use the notations $\ft{X}^{s, b}$, $\ft{Y}^{s, b}$, and $\ft{Z}^{s, b}$ to denote the corresponding norms on the Fourier side. In other words, for a complex-valued function $f$ defined on $\Z^2 \times \R$, we write
\begin{align*}
\| f \|_{\ft{X}^{s, b}} &= \| \F^{-1} (f) \|_{X^{s, b}}, \\
\| f \|_{\ft{Y}^{s, b}} &= \| \F^{-1} (f) \|_{Y^{s, b}}, \\
\| f \|_{\ft{Z}^{s, b}} &= \| \F^{-1} (f) \|_{Z^{s, b}},
\end{align*}

\noi
where $\F^{-1}$ is the inverse Fourier transform.

We now establish some linear estimates of the $Z^{s, b}$-norm. We start with the following $H^s$-energy estimate.
\begin{lemma}
\label{LEM:est1}
Let $s \in \R$ and $b > \frac 12$. Then, we have
\begin{align*}
\big\| \jb{n}^s \ft u (n, \tau) \big\|_{\l_n^2 L_\tau^1} \les \| u \|_{Z^{s, b}}.
\end{align*}
\end{lemma}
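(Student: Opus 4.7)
The plan is to decompose $u = P_{\text{lo}} u + P_{\text{hi}} u$ and handle each piece with the corresponding component of the $Z^{s,b}$-norm. By the triangle inequality,
\[
\big\| \jb{n}^s \ft u (n, \tau) \big\|_{\l_n^2 L_\tau^1} \leq \big\| \jb{n}^s \ft{P_{\text{lo}} u} (n, \tau) \big\|_{\l_n^2 L_\tau^1} + \big\| \jb{n}^s \ft{P_{\text{hi}} u} (n, \tau) \big\|_{\l_n^2 L_\tau^1},
\]
so it suffices to control each summand separately.

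For the high-modulation piece, the bound is immediate from the definition \eqref{Ysb} of the $Y^{s, b}$-norm, since the first term in that definition is exactly $\| \jb{n}^s \ft{P_{\text{hi}} u} \|_{\l_n^2 L_\tau^1}$, and this is in turn bounded by $\| P_{\text{hi}} u \|_{Y^{s, b}}$.

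For the low-modulation piece, the idea is a standard Cauchy-Schwarz trick in the $\tau$-variable against the weight $\jb{\tau + |n|^2}^{-b}$. Namely, fixing $n$ and writing
\[
\int_\R |\ft{P_{\text{lo}} u} (n, \tau)| \, \mathd \tau \leq \bigg( \int_\R \jb{\tau + |n|^2}^{-2b} \, \mathd \tau \bigg)^{1/2} \bigg( \int_\R \jb{\tau + |n|^2}^{2b} |\ft{P_{\text{lo}} u} (n, \tau)|^2 \, \mathd \tau \bigg)^{1/2},
\]
the first factor is a finite constant (independent of $n$) precisely because $b > \tfrac 12$. Multiplying by $\jb{n}^s$ and taking the $\l_n^2$-norm then yields $\| \jb{n}^s \ft{P_{\text{lo}} u} \|_{\l_n^2 L_\tau^1} \lesssim \| P_{\text{lo}} u \|_{X^{s, b}}$.

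Combining the two bounds gives $\| \jb{n}^s \ft u \|_{\l_n^2 L_\tau^1} \lesssim \| P_{\text{lo}} u \|_{X^{s, b}} + \| P_{\text{hi}} u \|_{Y^{s, b}} = \| u \|_{Z^{s, b}}$, as desired. There is no real obstacle here; the only point worth flagging is that the hypothesis $b > \tfrac 12$ is used exactly to make the Cauchy-Schwarz weight $\jb{\tau + |n|^2}^{-b}$ lie in $L^2_\tau(\R)$, so the threshold $b = \tfrac 12$ is the natural cutoff of the argument.
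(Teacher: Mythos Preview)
Your proof is correct and follows essentially the same approach as the paper: decompose $u$ into its low- and high-modulation pieces, bound the $X^{s,b}$ piece via Cauchy--Schwarz in $\tau$ (using $b>\tfrac12$), and bound the $Y^{s,b}$ piece directly from the definition of that norm. The only cosmetic difference is that the paper states the two inequalities $\|\jb{n}^s\ft u\|_{\l_n^2 L_\tau^1}\les\|u\|_{X^{s,b}}$ and $\|\jb{n}^s\ft u\|_{\l_n^2 L_\tau^1}\les\|u\|_{Y^{s,b}}$ for general $u$ before specializing to the pieces, whereas you apply them directly to $P_{\text{lo}}u$ and $P_{\text{hi}}u$.
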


\begin{proof}
By the definition of the $Z^{s, b}$-norm in \eqref{Zsb}, we know that it suffices to show the following two estimates:
\begin{align}
\big\| \jb{n}^s \ft u (n, \tau) \big\|_{\l_n^2 L_\tau^1} &\les \| u \|_{X^{s, b}}, \label{est1-1} \\
\big\| \jb{n}^s \ft u (n, \tau) \big\|_{\l_n^2 L_\tau^1} &\les \| u \|_{Y^{s, b}}. \label{est1-2}
\end{align}

\noi
Since $b > \frac 12$, we use the Cauchy-Schwarz inequality in $\tau$ to obtain
\begin{align*}
\big\| \jb{n}^s \ft u (n, \tau) \big\|_{\l_n^2 L_\tau^1} \les \big\| \jb{n}^s \jb{\tau + |n|^2}^b \ft u (n, \tau) \big\|_{\l_n^2 L_\tau^2} \leq \| u \|_{X^{s, b}},
\end{align*}

\noi
so that we obtain \eqref{est1-1}. Also, note that \eqref{est1-2} is easily obtained from the definition of the $Y^{s, b}$-norm in \eqref{Ysb}.
\end{proof}

The above lemma implies the following embedding result.

\begin{lemma}
\label{LEM:embed}
Let $s \in \R$, $b > \frac 12$, and $T > 0$. Then, we have
\begin{align*}
\| u \|_{C([-T, T]; H^s (\T^2))} \les \| u \|_{Z_T^{s, b}}.
\end{align*}

\noi
Consequently, the embedding
\begin{align*}
Z_T^{s, b} \embeds C([-T, T]; H^s (\T^2))
\end{align*}

\noi
holds.
\end{lemma}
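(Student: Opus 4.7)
The plan is to deduce this lemma almost directly from the preceding Lemma \ref{LEM:est1}, together with a standard Fourier-inversion argument in the time variable. The key point is that the $\ell_n^2 L_\tau^1$ control on $\langle n\rangle^s \widehat{u}(n,\tau)$ is exactly the right object to bound pointwise-in-$t$ the spatial Fourier coefficients of $u$.

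First, I would pick an arbitrary extension $v \in Z^{s,b}(\T^2 \times \R)$ of $u$ with $v|_{[-T,T]} = u$. Writing $\mathcal{F}_x v(n,t)$ for the spatial Fourier coefficient at time $t$, Fourier inversion in the time variable gives
\begin{equation*}
\mathcal{F}_x v(n, t) = \int_{\R} \widehat{v}(n, \tau)\, e^{it\tau}\, d\tau.
\end{equation*}
Taking absolute values under the integral yields the pointwise (in $t$) bound $|\mathcal{F}_x v(n,t)| \leq \|\widehat{v}(n, \cdot)\|_{L^1_\tau}$. Multiplying by $\langle n\rangle^s$, squaring, and summing in $n$, we get
\begin{equation*}
\sup_{t \in \R} \|v(\cdot, t)\|_{H^s(\T^2)} \leq \big\|\langle n\rangle^s \widehat{v}(n, \tau)\big\|_{\ell_n^2 L_\tau^1},
\end{equation*}
and Lemma \ref{LEM:est1} bounds the right-hand side by $\|v\|_{Z^{s,b}}$. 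Taking the infimum over all such extensions $v$ (using the definition \eqref{ZsbT} of $Z_T^{s,b}$) gives $\sup_{t \in [-T,T]} \|u(\cdot,t)\|_{H^s(\T^2)} \lesssim \|u\|_{Z_T^{s,b}}$, which is the quantitative estimate we want.

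For the continuity statement $u \in C([-T,T]; H^s(\T^2))$, I would argue by dominated convergence. For each fixed $n$, $t \mapsto \mathcal{F}_x v(n,t) = \int \widehat{v}(n,\tau) e^{it\tau}\, d\tau$ is continuous in $t$ because $\widehat{v}(n,\cdot) \in L^1_\tau$ (this is built into finiteness of the $\ell_n^2 L_\tau^1$ norm for a.e.\ $n$). For two times $t_1, t_2$, we have
\begin{equation*}
\|v(\cdot,t_1) - v(\cdot,t_2)\|_{H^s}^2 = \sum_{n} \langle n\rangle^{2s} \left| \int \widehat{v}(n,\tau)\bigl(e^{it_1\tau} - e^{it_2\tau}\bigr) d\tau\right|^2,
\end{equation*}
and each summand is dominated by $4 \langle n\rangle^{2s} \|\widehat{v}(n,\cdot)\|_{L^1_\tau}^2$, which is summable in $n$ by the estimate above. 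Each summand tends to $0$ as $t_1 \to t_2$ by dominated convergence in $\tau$, so a second application of dominated convergence (in $n$) gives continuity of $t \mapsto v(\cdot,t)$ into $H^s(\T^2)$, and hence of $u$ on $[-T,T]$.

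I do not expect any real obstacle here: the lemma is essentially a packaging of Lemma \ref{LEM:est1} with Fourier inversion in $t$. The only subtlety is the continuity claim, where one must be careful that the dominated convergence argument applies in both the $\tau$-integral and the $n$-sum; this is handled cleanly by the uniform $\ell_n^2 L_\tau^1$ majorant coming from Lemma \ref{LEM:est1}. Alternatively, one could first prove the estimate and continuity for Schwartz-in-time, smooth-in-space $v$ (where continuity is automatic), and then extend by density using the quantitative bound, since $Z^{s,b}$ is by definition the completion of such functions.
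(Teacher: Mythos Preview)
Your proof is correct and follows essentially the same approach as the paper: pick an extension $v$, invoke the embedding $\|v\|_{C([-T,T];H^s)} \lesssim \|\langle n\rangle^s \widehat{v}(n,\tau)\|_{\ell_n^2 L_\tau^1}$, apply Lemma~\ref{LEM:est1}, and optimize over extensions. The paper simply cites that embedding as a known fact and uses an $\eps$-argument in place of your infimum, whereas you spell out the Fourier-inversion and dominated-convergence details behind it; these are cosmetic differences, not substantive ones.
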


\begin{proof}
Let $\eps > 0$ and let $v$ be an extension of $u$ outside of $[-T, T]$ such that
\begin{align}
\| v \|_{Z^{s, b}} \leq \| u \|_{Z_T^{s, b}} + \eps.
\label{vu}
\end{align}

\noi
Note that we have the following embedding 
\begin{align}
\| v \|_{C([-T, T]; H^s (\T^2))} \les \big\| \jb{n}^s \ft v (n, \tau) \big\|_{\l_n^2 L_\tau^1}.
\label{emb}
\end{align}

\noi
Thus, by \eqref{emb}, Lemma \ref{LEM:est1}, and \eqref{vu}, we obtain
\begin{align*}
\| u \|_{C([-T, T]; H^s (\T^2))} &= \| v \|_{C([-T, T]; H^s (\T^2))} \\
&\les \big\| \jb{n}^s \ft v (n, \tau) \big\|_{\l_n^2 L_\tau^1} \\
&\les \| v \|_{Z^{s, b}} \\
&\leq \| u \|_{Z_T^{s, b}} + \eps,
\end{align*}

\noi
and so the desired estimate follows since $\eps > 0$ can be arbitrarily small.
\end{proof}

Lastly, we show the following lemma, which shows that the $X^{s, b}$-space is embedded in the $Z^{s, b}$-space. 
\begin{lemma}
\label{LEM:est2}
Let $s \leq 0$ and $b > \frac 12$. Then, we have
\begin{align*}
\| u \|_{Z^{s, b}} \les \| u \|_{X^{s, b}}.
\end{align*}
\end{lemma}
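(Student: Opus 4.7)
The plan is to split the $Z^{s,b}$-norm via its definition \eqref{Zsb} and bound each piece separately. It suffices to prove
\begin{align*}
\| P_{\textup{lo}} u \|_{X^{s,b}} &\les \| u \|_{X^{s,b}}, \\
\| P_{\textup{hi}} u \|_{Y^{s,b}} &\les \| u \|_{X^{s,b}}.
\end{align*}
The first estimate is immediate: $P_{\textup{lo}}$ acts on the Fourier side as multiplication by a characteristic function, which is bounded on the $L^2$-weighted norm defining $X^{s,b}$.

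For the second estimate, I control the two terms of the $Y^{s,b}$-norm from \eqref{Ysb} separately. The $\l_n^2 L_\tau^1$ piece is handled exactly as in the proof of Lemma \ref{LEM:est1}: since $b > \tfrac{1}{2}$, Cauchy-Schwarz in $\tau$ yields
\[
\big\| \jb{n}^s \ft{P_{\textup{hi}} u}(n,\tau) \big\|_{\l_n^2 L_\tau^1} \les \big\| \jb{n}^s \jb{\tau+|n|^2}^b \ft{P_{\textup{hi}} u}(n,\tau) \big\|_{\l_n^2 L_\tau^2} \leq \| u \|_{X^{s,b}}.
\]

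The second term, carrying weight $\jb{\tau+|n|^2}^{s/2 + b}$, is where the high-modulation support of $P_{\textup{hi}}$ enters crucially. The key pointwise observation is that
\[
\jb{\tau+|n|^2} \ges \jb{n}^2 \quad \text{on } \supp \ft{P_{\textup{hi}} u},
\]
which follows from the defining condition $|\tau+|n|^2| \geq 2^{-10} |n|^2$ by a short case analysis (trivial for $n = 0$ where $\jb{n} \sim 1$, and direct for $|n| \geq 1$ using the threshold $2^{-10}$). Since $s \leq 0$, raising this to the $s/2$ power reverses the inequality and gives $\jb{\tau+|n|^2}^{s/2} \les \jb{n}^s$ on the same support. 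Multiplying by $\jb{\tau+|n|^2}^b$ and taking the $\l_n^2 L_\tau^2$-norm then reduces the estimate to $\| u \|_{X^{s,b}}$, completing the bound.

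The argument presents no substantive obstacle; the only point requiring care is the pointwise weight comparison on $\supp \ft{P_{\textup{hi}} u}$, which is enforced precisely by the cutoff threshold in the definition of $P_{\textup{hi}}$ and by the hypothesis $s \leq 0$ (used to flip the direction of the inequality when passing to the $s/2$ power).
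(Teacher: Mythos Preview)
The proposal is correct and follows essentially the same approach as the paper: split via the definition of $Z^{s,b}$, bound $\|P_{\textup{lo}} u\|_{X^{s,b}}$ trivially, and for $\|P_{\textup{hi}} u\|_{Y^{s,b}}$ handle the $\l_n^2 L_\tau^1$ term by Cauchy--Schwarz (using $b>\tfrac12$) and the $\l_n^2 L_\tau^2$ term by the pointwise weight comparison $\jb{\tau+|n|^2}^{s/2}\les\jb{n}^s$ on the high-modulation support (using $s\le 0$). Your write-up is slightly more explicit about the $n=0$ case in the weight comparison, but otherwise the arguments coincide.
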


\begin{proof}
We recall from \eqref{Zsb} that
\begin{align*}
\| u \|_{Z^{s, b}} = \| P_{\text{lo}} u \|_{X^{s, b}} + \| P_{\text{hi}} u \|_{Y^{s, b}},
\end{align*}

\noi
where $P_{\textup{lo}}$ projects the space-time frequencies onto $\{ |\tau + |n|^2| < 2^{-10} |n|^2 \}$ and $P_{\textup{hi}}$ projects the space-time frequencies onto $\{ |\tau + |n|^2| \geq 2^{-10} |n|^2 \}$. Note that we have
\begin{align*}
\| P_{\text{lo}} u \|_{X^{s, b}} \leq \| u \|_{X^{s, b}}.
\end{align*}

\noi
For the $\| P_{\text{hi}} u \|_{Y^{s, b}}$ term, note that by the Cauchy-Schwarz inequality, we have
\begin{align*}
\big\| \jb{n}^s \ft u (n, \tau) \big\|_{\l_n^2 L_\tau^1} \les \big\| \jb{n}^s \jb{\tau + |n|^2}^{b} \ft u (n, \tau) \big\|_{\l_n^2 L_\tau^2} = \| u \|_{X^{s, b}},
\end{align*}

\noi
since $b > \frac 12$. Also, we have
\begin{align*}
\big\| \jb{\tau + |n|^2}^{\frac{s}{2} + b} \ft u (n, \tau) \ind_{\{|\tau + |n|^2| \geq 2^{-10} |n|^2\}} \big\|_{\l_n^2 L_\tau^2} \les \big\| \jb{n}^s \jb{\tau + |n|^2}^{b} \ft u (n, \tau) \big\|_{\l_n^2 L_\tau^2} = \| u \|_{X^{s, b}}.
\end{align*}

\noi
Thus, we obtain that $\| P_{\text{hi}} u \|_{Y^{s, b}} \les \| u \|_{X^{s, b}}$, so that we achieve the desired inequality.
\end{proof}

\section{Bilinear estimate}
\label{SEC:bilin}
In this section, we establish the crucial bilinear estimate with respect to the $Z^{s, b}$-norm introduced in the previous section. Specifically, we show the following proposition.
\begin{proposition}
\label{PROP:bilin}
Let $- \frac 23 < s \leq - \frac 12$ and $0 < T \leq \frac 12$. Let $\varphi : \R \to [0, 1]$ be a smooth function such that $\varphi \equiv 1$ on $[-1, 1]$ and $\varphi \equiv 0$ outside of $[-2, 2]$, and let $\varphi_T (t) := \varphi (t / T)$. Then, we have
\begin{align*}
\big\|  \jb{\tau + |n|^2}^{-1} \F_{x, t} \big( \varphi_T \cj{u} \cdot  \varphi_T \cj{v} \big) (n, \tau)  \big\|_{\ft{Z}^{s, \frac 23}} \les_\varphi T^\ta \| u \|_{Z^{s, \frac 23}} \| v \|_{Z^{s, \frac 23}}
\end{align*}

\noi
for some $\ta > 0$.
\end{proposition}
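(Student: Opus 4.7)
The plan is to prove the estimate via a dyadic decomposition in both frequency and modulation, adapted to the $X+Y$ structure of the $Z^{s,2/3}$-norm. I first split $u = P_{\mathrm{lo}} u + P_{\mathrm{hi}} u$ and $v = P_{\mathrm{lo}} v + P_{\mathrm{hi}} v$ so that each input piece is controlled by either $X^{s,2/3}$ or $Y^{s,2/3}$, and classify the output analogously by projection onto $\{|\tau+|n|^2|< 2^{-10}|n|^2\}$ and its complement (where we measure in the $\ft{X}^{s,2/3}$- or the $\ft{Y}^{s,2/3}$-norm, respectively). Next I dyadically decompose in spatial frequency, writing $|n_j| \sim N_j$ for the inputs and $|n| \sim N$ for the output; the convolution constraint $n+n_1+n_2=0$ (arising from the two complex conjugations, as in Table~\ref{table3}) forces $N \les N_{\max} := \max(N_1, N_2)$.

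The engine of the proof is the modulation identity for the $\cj u\cj v$ interaction. Writing $L_0 := \tau + |n|^2$ and $L_j := \sigma_j + |n_j|^2$ for the output and input modulations, a direct computation yields $L_0 + L_1 + L_2 = |n|^2 + |n_1|^2 + |n_2|^2 \ges N_{\max}^2$, so at least one of $|L_0|, |L_1|, |L_2|$ is always $\ges N_{\max}^2$. It is precisely this definite positivity of the phase that distinguishes $\cj u^2$ from $u^2$ (cf.~Remark~\ref{RMK:uu}) and makes subcritical bilinear estimates available. I then split the analysis into three frequency regimes --- high-high-to-low ($N_1 \sim N_2 \gg N$), high-high-to-high ($N_1 \sim N_2 \sim N$), and high-low ($N_1 \gg N_2$ or symmetric, forcing $N \sim N_{\max}$) --- and within each regime, into subcases according to which of the three modulations is largest.

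In each configuration, the modulation identity routes the derivative gain as follows. When $|L_0|$ dominates, the prefactor $\jb{L_0}^{-1} \les N_{\max}^{-2}$ supplies explicit gain on the output side, and the output is measured in the $\ft{Y}^{s,2/3}$-weight $\jb{L_0}^{s/2+2/3}$, which is harmless once $s > -\tfrac 23$. When $|L_j|$ dominates for $j\in\{1,2\}$, the corresponding input must lie in the $P_{\mathrm{hi}}$-part and is controlled by the $Y^{s,2/3}$-weight $\jb{L_j}^{s/2+2/3}$; this weight absorbs the factor $\jb{L_j}^{-1}$ reassigned from the phase after a change of variables. Each resulting bilinear pairing is then closed by the H\"older product $\|fg\|_{L^2_{t,x}} \le \|f\|_{L^4_{t,x}}\|g\|_{L^4_{t,x}}$ combined with the $L^4$-Strichartz estimate of Lemma~\ref{LEM:L4}. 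The choice $b = \tfrac 23$ is exactly the one that balances the two obstructions of Examples~\ref{EX:1} (requiring $s \ge 2b-2$) and~\ref{EX:2} (requiring $s \ge -b$), so all cases close precisely in the range $s > -\tfrac 23$.

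The $T^\theta$ factor is harvested from the cutoffs $\varphi_T$ via the time-localization Lemma~\ref{LEM:time}: since that lemma requires $b_2 < \tfrac 12$, I first reduce the $b$-exponent from $\tfrac 23$ down to a value slightly below $\tfrac 12$ using an intermediate interpolation/multiplication step, and then apply Lemma~\ref{LEM:time} to gain $T^\theta$. The $\ft{Y}^{s,2/3}$-components are handled analogously, with the $\ell_n^2 L_\tau^1$ piece reduced to an $\ell_n^2 L_\tau^2$ estimate by Cauchy--Schwarz after localization to a single dyadic modulation annulus. The main technical obstacle I expect is the high-high-to-low subcase in which both inputs lie in the $P_{\mathrm{lo}}$-part (precisely the setting of Example~\ref{EX:1}): there the output is forced into the high-modulation regime and must be measured by the $\ft{Y}^{s,2/3}$-norm, and one has to balance this against the $\eps$-derivative loss in the $L^4$-Strichartz estimate on $\T^2$ noted in Remark~\ref{RMK:besov} --- this loss is exactly what forces the strict inequality $s > -\tfrac 23$ rather than $s \ge -\tfrac 23$.
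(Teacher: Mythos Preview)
Your plan is correct and follows essentially the same route as the paper: dyadic Littlewood--Paley decomposition in frequency, the $P_{\mathrm{lo}}/P_{\mathrm{hi}}$ split on inputs and output to reduce to $X$-- or $Y$--controlled pieces, the phase identity $L_0+L_1+L_2=|n|^2+|n_1|^2+|n_2|^2\ges N_{\max}^2$ to force one large modulation, and case analysis by frequency regime and dominant modulation, closed with the $L^4$-Strichartz estimate and Lemma~\ref{LEM:time} for the $T^\theta$ gain. One small correction: not every pairing is closed by $L^4$--Strichartz --- in several subcases (e.g.\ when both relevant pieces carry high modulation, or in the $\ell_n^2 L_\tau^1$ part of the output $Y$-norm in the high-high regime) the paper instead uses straight Young/H\"older convolution estimates in $\ell_n^2 L_\tau^p$, which you should keep available; otherwise your outline matches the paper's Lemmas~\ref{LEM:HL} and~\ref{LEM:HH} and their assembly.
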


Let us first consider two particular cases of Proposition \ref{PROP:bilin}. We start with the following ``high-low interaction'' estimate.
\begin{lemma}
\label{LEM:HL}
Let $-\frac 23 < s \leq -\frac 12$ and $0 < T \leq \frac 12$. Let $N$, $N_1$, and $N_2$ be dyadic numbers. Let $\varphi : \R \to [0, 1]$ be a smooth function such that $\varphi \equiv 1$ on $[-1, 1]$ and $\varphi \equiv 0$ outside of $[-2, 2]$, and let $\varphi_T (t) := \varphi (t / T)$.

\smallskip \noi
\textup{(i)} If $2^{-5} N \leq N_1 \leq 2^5 N$ and $N_2 \leq 2^6 N$, we have
\begin{align*}
\big\| \jb{\tau + |n|^2}^{-1} \F_{x, t} \big( \varphi_T \cj{u_{N_1}} \cdot \varphi_T \cj{v_{N_2}} \big) (n, \tau) \big\|_{\ft{Z}^{s, \frac 23} (\Pf_N)} \les_\varphi N_2^{-\dl} T^\ta \| u_{N_1} \|_{Z^{s, \frac 23}} \| v_{N_2} \|_{Z^{s, \frac 23}}
\end{align*}

\noi
for some $\dl > 0$ and $\ta > 0$.

\smallskip \noi
\textup{(ii)} If $2^{-5} N \leq N_2 \leq 2^5 N$ and $N_1 \leq 2^6 N$, we have
\begin{align*}
\big\| \jb{\tau + |n|^2}^{-1} \F_{x, t} \big( \varphi_T \cj{u_{N_1}} \cdot \varphi_T \cj{v_{N_2}} \big) (n, \tau) \big\|_{\ft{Z}^{s, \frac 23} (\Pf_N)} \les_\varphi N_1^{-\dl} T^\ta \| u_{N_1} \|_{Z^{s, \frac 23}} \| v_{N_2} \|_{Z^{s, \frac 23}}
\end{align*}

\noi
for some $\dl > 0$ and $\ta > 0$.
\end{lemma}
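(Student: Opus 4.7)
The plan is to prove case (i) and observe that case (ii) follows by interchanging the roles of $m_1$ and $m_2$. Expanding $\F_{x,t}(\cj{u_{N_1}}\cdot\cj{v_{N_2}})$ as a Fourier convolution, on the support of the integrand the variables $(m_j, \sigma_j)$ (where $(m_1, \sigma_1)$ lies in the support of $\hat u_{N_1}$ and $(m_2, \sigma_2)$ in the support of $\hat v_{N_2}$) satisfy $n + m_1 + m_2 = 0$ and $\tau + \sigma_1 + \sigma_2 = 0$, together with the \emph{resonance identity}
\[
(\tau + |n|^2) + (\sigma_1 + |m_1|^2) + (\sigma_2 + |m_2|^2) = |n|^2 + |m_1|^2 + |m_2|^2 \ges N^2,
\]
using $|m_1|\sim N$. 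Hence at least one of the three modulations $M_0 := \jb{\tau+|n|^2}$ and $M_j := \jb{\sigma_j+|m_j|^2}$ ($j=1,2$) is $\ges N^2$.

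The first step is a modulation decomposition. Using the infimum definition of $Z^{s,2/3}$, I decompose each input $u_{N_1}, v_{N_2}$ into a low-modulation piece (Schr\"odinger modulation below $2^{-10}|m|^2$, controlled by $X^{s,2/3}$) and a high-modulation piece (above threshold, controlled by $Y^{s,2/3}$). Similarly, I decompose
\[
F := \jb{\tau+|n|^2}^{-1}\F_{x,t}\big(\varphi_T\cj{u_{N_1}}\cdot\varphi_T\cj{v_{N_2}}\big) = F_{\textup{lo}} + F_{\textup{hi}}
\]
via $P_{\textup{lo}}$ and $P_{\textup{hi}}$, giving $\|F\|_{\ft Z^{s,2/3}(\Pf_N)} \les \|F_{\textup{lo}}\|_{\ft X^{s,2/3}} + \|F_{\textup{hi}}\|_{\ft Y^{s,2/3}}$. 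This produces eight interaction subcases, of which the ``all three low-modulation'' subcase is vacuous: each modulation being strictly below its corresponding $2^{-10}$-threshold would force the left-hand side of the resonance identity to be strictly less than its right-hand side.

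The second step is to bound each of the remaining seven subcases. In each one, at least one factor carries a high-modulation weight that I convert into an $L^2_{t,x}$-gain: either $\jb{\tau+|n|^2}^{s/2+2/3-1}\les N^{s-2/3}$ on the output side (combining $\jb{\tau+|n|^2}^{-1}$ with the $Y^{s,2/3}$ weight; this is a gain since $s-\tfrac{2}{3}<0$), or $\jb{\sigma_j+|m_j|^2}^{s/2+2/3}\ges N^{s+4/3}$ on an input side (giving an $L^2_{t,x}$-gain of $N^{-(s+4/3)}$; this is a gain since $s+\tfrac{4}{3}>0$). I then estimate the bilinear form by H\"older's inequality, placing each low-modulation factor into $L^4_{t,x}$ via Lemma~\ref{LEM:L4} (paying $N_j^\eps$ per factor) and each high-modulation factor into $L^2_{t,x}$ via the weight. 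The crucial $N_2^{-\dl}$ gain is extracted from the geometric constraint on the convolution: for fixed output frequency $|n|\sim N$ and $|m_1|\sim N$, the relation $m_2 = -n-m_1$ with $|m_2|\les N_2$ restricts $m_1$ to an annular cap of cardinality $\sim N_2^2$, so Cauchy-Schwarz in $m_1$ contributes $\sim N_2$ rather than the full $\sim N$; combined with the modulation and Strichartz exponents, this produces a strictly negative power of $N_2$ precisely when $s>-\tfrac{2}{3}$. Finally, the time factor $T^\ta$ is extracted from the cutoffs $\varphi_T$ via the standard time-localization argument using Lemma~\ref{LEM:time}, applied after reducing the modulation regularity on the low-modulation $X^{s,2/3}$-pieces down to the allowed range $b<\tfrac{1}{2}$.

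The main obstacle is the subcase where exactly one factor is high-modulation while the other two are low. In this configuration the Strichartz $\eps$-loss on $\T^2$ (Lemma~\ref{LEM:L4}), the modulation gain ($\sim N^{s-2/3}$ from the output or $\sim N^{-(s+4/3)}$ from an input), the geometric $N_2$-gain, and the $\jb{n}^s$ weights from the $Z^{s,2/3}$-norms must all balance simultaneously; this balance is tight precisely at $s=-\tfrac{2}{3}$ and reflects the sharpness of Examples~\ref{EX:1} and~\ref{EX:2}, where $b=2/3$ is pinned as the unique exponent at which both examples become marginally manageable. Verifying that each of the seven subcases yields a \emph{strictly} negative power of $N_2$ (rather than merely $N_2^{0+}$) is the most delicate bookkeeping step, and it is exactly where the regularity threshold $s>-\tfrac{2}{3}$ becomes binding.
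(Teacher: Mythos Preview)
Your overall architecture---splitting each of the three factors into low/high modulation pieces, discarding the all-low configuration via the resonance identity, handling low-modulation factors with the $L^4$-Strichartz estimate and high-modulation factors in $L^2$ via the $Y^{s,2/3}$ weight, and extracting $T^\ta$ through Lemma~\ref{LEM:time}---is exactly the paper's approach, which is organized there as two main cases (by the output modulation) with three subcases each.

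The point that needs correction is your account of where the $N_2^{-\dl}$ gain originates. The paper uses no cap-counting on $m_1$; the decay comes entirely from exponent bookkeeping together with the hypothesis $N_2 \les N \sim N_1$. For example, in the subcase with output at high modulation and both inputs at low modulation, one obtains (after Strichartz on each input and the output weight $\jb{\tau+|n|^2}^{s/2-1/3}\les N^{s-2/3}$) a bound $\sim N^{s-2/3}\cdot N_1^{-s+O(\eps)}\cdot N_2^{-s+O(\eps)}$; since $N_1\sim N$ this collapses to $N^{-2/3+O(\eps)}N_2^{-s+O(\eps)}\les N_2^{-s-2/3+O(\eps)}$, which is $N_2^{-\dl}$ with $\dl>0$ precisely when $s>-\tfrac{2}{3}$. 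In the subcases where Young's inequality is used instead of Strichartz, the $N_2$-powers that appear come from $\|\ft{v_{N_2}}\|_{\l^1_{n_2}}\les N_2\|\ft{v_{N_2}}\|_{\l^2_{n_2}}$, i.e.\ from the support of $\ft{v_{N_2}}$ itself, not from any restriction on $m_1$. Your observation that for fixed $n$ the variable $m_1$ lies in a ball of radius $N_2$ is of course correct, but it is the same constraint as $|m_2|\les N_2$ and is already absorbed into the norms on $v_{N_2}$; it does not furnish an additional $N_2/N$ factor beyond what Young/H\"older/Strichartz already deliver. If any of your seven subcases appears to require that extra factor to close, recheck it against the paper's corresponding computation---none of them does.
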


\begin{proof}
By the symmetry of $u$ and $v$, it suffices to prove (i). Below we use $(n_1, \tau_1)$ as the variables of $\ft{ \varphi_T u_{N_1} }$ or $\ft{u_{N_1}}$ and $(n_2, \tau_2)$ as the variables of $\ft{ \varphi_T v_{N_2} }$ or $\ft{v_{N_2}}$. Note that we have the relations $\tau + \tau_1 + \tau_2 = 0$ and $n + n_1 + n_2 = 0$. We also recall the notation $\wt{f} (x) = f(-x)$.

We divide the argument into two main cases depending on the relationship between the modulation function $\tau + |n|^2$ and $|n|^2$.

\medskip \noi
\textbf{Case 1:} $| \tau + |n|^2 | \geq 2^{-10} |n|^2$.

In this case, we need to evaluate the $\F_{x, t} \big( \varphi_T \cj{u_{N_1}} \cdot \varphi_T \cj{v_{N_2}} \big)$ term using the $\ft{Y}^{s, \frac 23}$-norm, and we need to evaluate both the $\l_n^2 L_\tau^1$ term and the $\l_n^2 L_\tau^2$ term. We consider the following three subcases.

\medskip \noi
\textbf{Subcase 1.1:} $|\tau_1 + |n_1|^2| \geq 2^{-10} |n_1|^2$.

In this subcase, we need to estimate $u_{N_1}$ using the $Y^{s, \frac 23}$-norm. By Young's convolution inequality, Lemma \ref{LEM:time}, the Cauchy-Schwarz inequality, and Lemma \ref{LEM:est1}, we obtain
\begin{align}
\begin{split}
\big\| &\jb{\tau + |n|^2}^{\frac{s}{2} - \frac 13} \F_{x, t} \big( \varphi_T \cj{u_{N_1}} \cdot \varphi_T \cj{v_{N_2}} \big) \big\|_{\l_n^2 L_\tau^2 (\Pf_N)} \\
&= \bigg\| \jb{\tau + |n|^2}^{\frac{s}{2} - \frac 13} \wt{\ft{\varphi_T^2 u_{N_1}}} * \wt{\ft{v_{N_2}}} \bigg\|_{\l_n^2 L_\tau^2 (\Pf_N)} \\
&\les N^{s - \frac 23} \Big\| \ft{\varphi_T^2 u_{N_1}} \Big\|_{\l_{n_1}^2 L_{\tau_1}^2} \big\| \ft{v_{N_2}} \big\|_{\l_{n_2}^1 L_{\tau_2}^1} \\
&\les_\varphi N^{s - \frac 23} T^{\eps} \big\| \jb{\tau_1 + |n_1|^2}^\eps \ft{u_{N_1}} \big\|_{\l_{n_1}^2 L_{\tau_1}^2} N_2^{- s + 1} \big\| \jb{n_2}^s \ft{v_{N_2}} \big\|_{\l_{n_2}^2 L_{\tau_2}^1} \\
&\les N^{s - \frac 23} T^\eps N_1^{-s - \frac 43 + 2\eps}  \| u_{N_1} \|_{Y^{s, \frac 23}} N_2^{- s + 1} \| v_{N_2} \|_{Z^{s, \frac 23}} \\
&\les N^{-2 + 2\eps} N_2^{-s + 1} T^\eps \| u_{N_1} \|_{Z^{s, \frac 23}} \| v_{N_2} \|_{Z^{s, \frac 23}},
\end{split}
\label{HL1-1}
\end{align}

\noi
where $\eps > 0$ is arbitrarily small. Since $ - s + 1 > 0$ given $s \leq -\frac 12$, the above estimate is acceptable if $-s - 1 + 2\eps < 0$, which is valid given $s > - \frac 23$ and $\eps > 0$ sufficiently small.

Also, by the Cauchy-Schwarz inequality, we get
\begin{align*}
\big\| &\jb{n}^s \jb{\tau + |n|^2}^{- 1} \F_{x, t} \big( \varphi_T \cj{u_{N_1}} \cdot \varphi_T \cj{v_{N_2}} \big) \big\|_{\l_n^2 L_\tau^1 (\Pf_N)} \\
&\les \big\| \jb{n}^s \jb{\tau + |n|^2}^{- \frac 13} \F_{x, t} \big( \varphi_T \cj{u_{N_1}} \cdot \varphi_T \cj{v_{N_2}} \big) \big\|_{\l_n^2 L_\tau^2 (\Pf_N)},
\end{align*}

\noi
which can be estimated similarly as in \eqref{HL1-1}. Combining the above two estimates, we obtain the desired inequality.

\medskip \noi
\textbf{Subcase 1.2:} $|\tau_2 + |n_2|^2| \geq 2^{-10} |n_2|^2$.

In this subcase, we need to estimate $v_{N_2}$ using the $Y^{s, \frac 23}$-norm. By Young's convolution inequality, the Cauchy-Schwarz inequality, Lemma \ref{LEM:time}, and Lemma \ref{LEM:est1}, we obtain
\begin{align}
\begin{split}
\big\| &\jb{\tau + |n|^2}^{\frac{s}{2} - \frac 13} \F_{x, t} \big( \varphi_T \cj{u_{N_1}} \cdot \varphi_T \cj{v_{N_2}} \big) \big\|_{\l_n^2 L_\tau^2 (\Pf_N)} \\
&= \bigg\| \jb{\tau + |n|^2}^{\frac{s}{2} - \frac 13} \wt{\ft{u_{N_1}}} * \wt{\ft{\varphi_T^2 v_{N_2}}} \bigg\|_{\l_n^2 L_\tau^2 (\Pf_N)} \\
&\les N^{s - \frac 23} \big\| \ft{u_{N_1}} \big\|_{\l_{n_1}^2 L_{\tau_1}^1} \Big\| \ft{\varphi_T^2 v_{N_2}} \Big\|_{\l_{n_2}^1 L_{\tau_2}^2} \\
&\les_\varphi N^{s - \frac 23} N_1^{-s} \big\| \jb{n_1}^s \ft{u_{N_1}} \big\|_{\l_{n_1}^2 L_{\tau_1}^1}  T^\eps N_2 \big\| \jb{\tau_2 + |n_2|^2}^\eps \ft{v_{N_2}} \big\|_{\l_{n_2}^2 L_{\tau_2}^2} \\
&\les N^{s - \frac 23} N_1^{-s} \| u_{N_1} \|_{Z^{s, \frac 23}} T^\eps N_2^{- s - \frac 13 + 2\eps} \| v_{N_2} \|_{Y^{s, \frac 23}} \\
&\les N^{- \frac 23} N_2^{- s - \frac 13 + 2\eps} T^\eps \| u_{N_1} \|_{Z^{s, \frac 23}} \| v_{N_2} \|_{Z^{s, \frac 23}},
\end{split}
\label{HL1-2}
\end{align}

\noi
where $\eps > 0$ is arbitrarily small. Since $-s - \frac 13 + 2\eps > 0$ given $s \leq - \frac 12$, the above estimate is acceptable if $-s - 1 + 2\eps < 0$, which is valid given $s > - \frac 23$ and $\eps > 0$ sufficiently small. 

Also, by the Cauchy-Schwarz inequality, we get
\begin{align*}
\big\| &\jb{n}^s \jb{\tau + |n|^2}^{- 1} \F_{x, t} \big( \varphi_T \cj{u_{N_1}} \cdot \varphi_T \cj{v_{N_2}} \big) \big\|_{\l_n^2 L_\tau^1 (\Pf_N)} \\
&\les \big\| \jb{n}^s \jb{\tau + |n|^2}^{- \frac 13} \F_{x, t} \big( \varphi_T \cj{u_{N_1}} \cdot \varphi_T \cj{v_{N_2}} \big) \big\|_{\l_n^2 L_\tau^2 (\Pf_N)},
\end{align*}

\noi
which can be estimated similarly as in \eqref{HL1-2}. Combining the above two estimates, we obtain the desired inequality.

\medskip \noi
\textbf{Subcase 1.3:} $|\tau_1 + |n_1|^2| < 2^{-10} |n_1|^2$ and $|\tau_2 + |n_2|^2| < 2^{-10} |n_2|^2$.

In this subcase, we need to estimate both $u_{N_1}$ and $v_{N_2}$ using the $X^{s, \frac 23}$-norm. Using the fact that $\varphi_T$ is supported on $[-1, 1]$ given $0 < T \leq \frac 12$, by the Plancherel theorem, H\"older's inequality, Lemma \ref{LEM:L4}, and Lemma \ref{LEM:time}, we obtain
\begin{align}
\begin{split}
\big\| &\jb{\tau + |n|^2}^{\frac{s}{2} - \frac 13} \F_{x, t} \big( \varphi_T \cj{u_{N_1}} \cdot \varphi_T \cj{v_{N_2}} \big) \big\|_{\l_n^2 L_\tau^2 (\Pf_N)} \\
&\les N^{s - \frac 23} \| \varphi_T \cj{u_{N_1}} \cdot \varphi_T \cj{v_{N_2}} \|_{L^2_t ([-1, 1]; L^2_x (\T^2))} \\
&\les N^{s - \frac 23} \| \varphi_T u_{N_1} \|_{L^4_t ([-1, 1]; L^4_x (\T^2))} \| \varphi_T v_{N_2} \|_{L^4_t ([-1, 1]; L^4_x (\T^2))} \\
&\les N^{s - \frac 23} N_1^{4\eps} \| \varphi_T u_{N_1} \|_{X^{0, \frac 12 - \eps}} N_2^{4\eps} \| \varphi_T v_{N_2} \|_{X^{0, \frac 12 - \eps}} \\
&\les_\varphi N^{s - \frac 23} N_1^{-s + 4\eps} T^{\frac{\eps}{2}} \| u_{N_1} \|_{X^{s, \frac 12 - \frac{\eps}{2}}} N_2^{-s + 4\eps} T^{\frac{\eps}{2}} \| v_{N_2} \|_{X^{s, \frac 12 - \frac{\eps}{2}}} \\
&\les N^{- \frac 23 + 4\eps} N_2^{-s + 4\eps} T^{\eps} \| u_{N_1} \|_{Z^{s, \frac 23}} \| v_{N_2} \|_{Z^{s, \frac 23}},
\end{split}
\label{HL1-3}
\end{align}

\noi
where $\eps > 0$ is arbitrarily small. Since $s \leq - \frac 12 < 0$, the above estimate is acceptable if $-s - \frac 23 + 8\eps < 0$, which is valid given $s > - \frac 23$ and $\eps > 0$ small enough.

Also, by the Cauchy-Schwarz inequality, we get
\begin{align*}
\big\| &\jb{n}^s \jb{\tau + |n|^2}^{- 1} \F_{x, t} \big( \varphi_T \cj{u_{N_1}} \cdot \varphi_T \cj{v_{N_2}} \big) \big\|_{\l_n^2 L_\tau^1 (\Pf_N)} \\
&\les \big\| \jb{n}^s \jb{\tau + |n|^2}^{- \frac 13} \F_{x, t} \big( \varphi_T \cj{u_{N_1}} \cdot \varphi_T \cj{v_{N_2}} \big) \big\|_{\l_n^2 L_\tau^2 (\Pf_N)},
\end{align*}

\noi
which can be estimated similarly as in \eqref{HL1-3}. Combining the above two estimates, we obtain the desired inequality.

\medskip \noi
\textbf{Case 2:} $|\tau + |n|^2| < 2^{-10} |n|^2$.

In this case, we need to evaluate the $\F_{x, t} \big( \varphi_T \cj{u_{N_1}} \cdot \varphi_T \cj{v_{N_2}} \big)$ term using the $\ft{X}^{s, \frac 23}$-norm. 

We assume that $n \neq 0$. Note that if $n = 0$, we have $N = 1$ which then implies that $N_1 \leq 2^5$ and $N_2 \leq 2^6$, and so the estimate will follow in a similar (and much easier) manner.

We consider the following three subcases.

\medskip \noi
\textbf{Subcase 2.1:} $|\tau_1 + |n_1|^2| \geq 2^{-10} |n_1|^2$ and $|\tau_2 + |n_2|^2| \geq 2^{-10} |n_2|^2$.

In this subcase, we need to estimate both $u_{N_1}$ and $v_{N_2}$ using the $Y^{s, \frac 23}$-norm. By H\"older's inequality, Young's convolution inequality, and Lemma \ref{LEM:time}, we have
\begin{align*}
\big\| &\jb{n}^s \jb{\tau + |n|^2}^{- \frac 13} \F_{x, t} \big( \varphi_T \cj{u_{N_1}} \cdot \varphi_T \cj{v_{N_2}} \big) \big\|_{\l_n^2 L_\tau^2 (\Pf_N)} \\
&= \bigg\| \jb{n}^s \jb{\tau + |n|^2}^{- \frac 13} \wt{\ft{u_{N_1}}} * \wt{\ft{\varphi_T^2 v_{N_2}}} \bigg\|_{\l_n^2 L_\tau^2 (\Pf_N)} \\
&\les \big\| \jb{n}^s \jb{\tau + |n|^2}^{- \frac 13} \big\|_{\l_n^2 L_\tau^2 (\Pf_N)} \big\| \ft{u_{N_1}} \big\|_{\l_{n_1}^2 L_{\tau_1}^2} \Big\| \ft{\varphi_T^2 v_{N_2}} \Big\|_{\l_{n_2}^2 L_{\tau_2}^2} \\
&\les_\varphi N^{s + 1} N^{\frac 13} N_1^{- s - \frac 43} \| u_{N_1} \|_{Y^{s, \frac 23}} T^\eps \big\| \jb{\tau_2 + |n_2|^2}^\eps \ft{v_{N_2}} \big\|_{\l_{n_2}^2 L_{\tau_2}^2} \\
&\les T^{\eps} \| u_{N_1} \|_{Z^{s, \frac 23}}  N_2^{-s - \frac 43 + 2 \eps}  \| v_{N_2} \|_{Y^{s, \frac 23}}  \\
&\les N_2^{-s - \frac 43 + 2\eps} T^{\eps} \| u_{N_1} \|_{Z^{s, \frac 23}} \| v_{N_2} \|_{Z^{s, \frac 23}},
\end{align*}

\noi
where $\eps > 0$ is arbitrarily small. The above estimate is acceptable if $-s - \frac 43 + 2\eps < 0$, which is valid given $s > - \frac 23$ and $\eps > 0$ sufficiently small.

\medskip \noi
\textbf{Subcase 2.2:} $| \tau_1 + |n_1|^2 | \geq 2^{-10} |n_1|^2$ and $| \tau_2 + |n_2|^2 | < 2^{-10} |n_2|^2$.

In this subcase, we need to estimate $u_{N_1}$ using the $Y^{s, \frac 23}$-norm and estimate $v_{N_2}$ using the $X^{s, \frac 23}$-norm. By duality and the Cauchy-Schwarz inequality, we have
\begin{align}
\begin{split}
\big\| &\jb{n}^s \jb{\tau + |n|^2}^{- \frac 13} \F_{x, t} \big( \varphi_T \cj{u_{N_1}} \cdot \varphi_T \cj{v_{N_2}} \big) \big\|_{\l_n^2 L_\tau^2 (\Pf_N)} \\
&\les N^s \sup_{\| h \|_{\l_n^2 L_\tau^2 (\Pf_N)} \leq 1} \bigg| \sum_{\substack{n, n_1, n_2 \in \Z^2 \\ n + n_1 + n_2 = 0}} \iint_{\tau + \tau_1 + \tau_2 = 0} \ft{\varphi_T u_{N_1}} (n_1, \tau_1) \ft{\varphi_T v_{N_2}} (n_2, \tau_2) \\
&\qquad \times \frac{h(n, \tau)}{\jb{\tau + |n|^2}^{\frac 13}} d\tau d\tau_1 \bigg| \\
&\leq N^s \big\| \ft{\varphi_T u_{N_1}} \big\|_{\l_{n_1}^2 L_{\tau_1}^2} \sup_{\| h \|_{\l_n^2 L_\tau^2 (\Pf_N)} \leq 1} \bigg\| \sum_{\substack{n, n_2 \in \Z^2 \\ n + n_1 + n_2 = 0}} \int_{\tau + \tau_1 + \tau_2 = 0} \ft{\varphi_T v_{N_2}} (n_2, \tau_2) \\
&\qquad \times \frac{h(n, \tau)}{\jb{\tau + |n|^2}^{\frac 13}} d\tau  \bigg\|_{\l_{n_1}^2 L_{\tau_1}^2}.
\end{split}
\label{HL2-2}
\end{align}

\noi
Let $w_N$ be a space-time distribution that satisfy  $\ft{w_N} (n, \tau) = h(n, \tau) / \jb{\tau + |n|^2}^{\frac 13}$. Then, using the fact that $\varphi_T$ is supported on $[-1, 1]$ given $0 < T \leq \frac 12$, by the Plancherel theorem, H\"older's inequality, Lemma \ref{LEM:L4}, and Lemma \ref{LEM:time}, we have
\begin{align*}
\bigg\| &\sum_{\substack{n, n_2 \in \Z^2 \\ n + n_1 + n_2 = 0}} \int_{\tau + \tau_1 + \tau_2 = 0} \ft{\varphi_T v_{N_2}} (n_2, \tau_2) \frac{h(n, \tau)}{\jb{\tau + |n|^2}^{\frac 13}} d\tau  \bigg\|_{\l_{n_1}^2 L_{\tau_1}^2} \\ 
&= \| \varphi_T v_{N_2} \wt{w_N} \|_{L^2_t ([-1, 1]; L^2_x (\T^2))} \\
&\les \| \varphi_T v_{N_2} \|_{L^4_t ([-1, 1]; L^4_x (\T^2))} \| w_N \|_{L^4_t ([-1, 1]; L^4_x (\T^2))} \\
&\les N_2^{4 \eps} \| \varphi_T v_{N_2} \|_{X^{0, \frac 12 - \eps}} N^{\frac 13 + \eps} \| w_N \|_{X^{0, \frac 13}} \\
&\les_\varphi N_2^{- s + 4\eps} T^{\frac{\eps}{2}} \| v_{N_2} \|_{X^{s, \frac 23}} N^{\frac 13 + \eps} \| h \|_{\l_n^2 L_\tau^2 (\Pf_N)},
\end{align*}

\noi
where $\eps > 0$ is arbitrarily small. Thus, continuing with \eqref{HL2-2}, we use Lemma \ref{LEM:time} to obtain
\begin{align*}
\big\| &\jb{n}^s \jb{\tau + |n|^2}^{- \frac 13} \F_{x, t} \big( \varphi_T \cj{u_{N_1}} \cdot \varphi_T \cj{v_{N_2}} \big) \big\|_{\l_n^2 L_\tau^2 (\Pf_N)} \\
&\les_\varphi N^{s + \frac 13 + \eps} \big\| \ft{\varphi_T u_{N_1}} \big\|_{\l_{n_1}^2 L_{\tau_1}^2} N_2^{-s + 4\eps} T^{\frac{\eps}{2}} \| v_{N_2} \|_{X^{s, \frac 23}} \\
&\les_\varphi N^{s + \frac 13 + \eps} N_2^{-s + 4\eps} T^{\eps} \big\| \jb{\tau_1 + |n_1|^2}^\frac{\eps}{2} \ft{u_{N_1}} \big\|_{\l_{n_1}^2 L_{\tau_1}^2} \| v_{N_2} \|_{Z^{s, \frac 23}} \\
&\les N^{s + \frac 13 + \eps} N_2^{-s + 4\eps} T^{\eps} N_1^{-s - \frac 43 + \eps} \| u_{N_1} \|_{Y^{s, \frac 23}} \| v_{N_2} \|_{Z^{s, \frac 23}} \\
&\les N^{- 1 + 2 \eps} N_2^{-s + 4\eps} T^{\eps} \| u_{N_1} \|_{Z^{s, \frac 23}} \| v_{N_2} \|_{Z^{s, \frac 23}}.
\end{align*}

\noi
Since $s < 0$, the above estimate is acceptable if $-s - 1 + 6 \eps < 0$, which is valid given $s > - \frac 23$ and $\eps > 0$ small enough.

\medskip \noi
\textbf{Subcase 2.3:} $|\tau_1 + |n_1|^2| < 2^{-10} |n_1|^2$.

In this subcase, we first note that
\begin{align*}
\tau < 2^{-10} |n|^2 - |n|^2 \quad \text{and} \quad \tau_1 < 2^{-10} |n_1|^2 - |n_1|^2.
\end{align*}

\noi
Note that since we assumed $n \neq 0$, we have
\begin{align*}
\tau_2 = - \tau - \tau_1 > |n|^2 - 2^{-10} |n|^2 + |n_1|^2 - 2^{-10} |n_1|^2 > \frac{1}{2} |n|^2.
\end{align*}

\noi
Thus, we have 
\begin{align}
|\tau_2 + |n_2|^2| \ges N^2
\label{addi}
\end{align}
and $|\tau_2 + |n_2|^2| \geq |n_2|^2 > 2^{-10} |n_2|^2$.

We need to estimate $u_{N_1}$ using the $X^{s, \frac 23}$-norm and estimate $v_{N_2}$ using the $Y^{s, \frac 23}$-norm. By using similar steps as in Subcase 2.2 by switching the roles of $u_{N_1}$ and $v_{N_2}$ along with the additional condition \eqref{addi}, we obtain
\begin{align*}
\big\| &\jb{n}^s \jb{\tau + |n|^2}^{- \frac 13} \F_{x, t} \big( \varphi_T \cj{u_{N_1}} \cdot \varphi_T \cj{v_{N_2}} \big) \big\|_{\l_n^2 L_\tau^2 (\Pf_N)}  \\
&\les_\varphi N^{s + \frac 13 + \eps} N_1^{-s + 4\eps} T^{\frac{\eps}{2}} \| u_{N_1} \|_{X^{s, \frac 23}} \big\| \ft{\varphi_T v_{N_2}} \big\|_{\l_{n_2}^2 L_{\tau_2}^2} \\
&\les_\varphi N^{\frac 13 + 5 \eps} T^\eps \| u_{N_1} \|_{Z^{s, \frac 23}} \big\| \jb{\tau_2 + |n_2|^2}^{\frac{\eps}{2}} \ft{v_{N_2}} \big\|_{\l_{n_2}^2 L_{\tau_2}^2} \\
&\les N^{\frac 13 + 5 \eps} T^\eps \| u_{N_1} \|_{Z^{s, \frac 23}} N^{-s - \frac 43 + \eps} \| v_{N_2} \|_{Y^{s, \frac 23}} \\
&\les N^{-s - 1 + 6 \eps} T^\eps \| u_{N_1} \|_{Z^{s, \frac 23}} \| v_{N_2} \|_{Z^{s, \frac 23}}.
\end{align*}

\noi
where $\eps > 0$ is arbitrarily small. The above estimate is acceptable if $-s - 1 + 6 \eps < 0$, which is valid given $s > - \frac 23$ and $\eps > 0$ small enough.

\medskip
Thus, we have finished our proof.
\end{proof}

We now show the following ``high-high interaction'' estimate.
\begin{lemma}
\label{LEM:HH}
Let $-\frac 23 < s \leq -\frac 12$ and $0 < T \leq \frac 12$. Let $N$, $N_1$, and $N_2$ be dyadic numbers such that $\frac 12 N_1 \leq N_2 \leq 2 N_1$ and $N < 2^{-5} N_1$. Let $\varphi: \R \to [0, 1]$ be a smooth function such that $\varphi \equiv 1$ on $[-1, 1]$ and $\varphi \equiv 0$ outside of $[-2, 2]$, and let $\varphi_T (t) := \varphi (t / T)$. Then, we have
\begin{align*}
\big\| \jb{\tau + |n|^2}^{-1} \F_{x, t} \big( \varphi_T \cj{u_{N_1}} \cdot \varphi_T \cj{v_{N_2}} \big) (n, \tau) \big\|_{\ft{Z}^{s, \frac 23} (\Pf_N)} \les_\varphi N^{-\dl} T^\ta \| u_{N_1} \|_{Z^{s, \frac 23}} \| v_{N_2} \|_{Z^{s, \frac 23}}
\end{align*}

\noi
for some $\dl > 0$ and $\ta > 0$.
\end{lemma}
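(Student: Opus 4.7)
The plan is to exploit the phase function identity
\begin{align*}
(\tau + |n|^2) + (\tau_1 + |n_1|^2) + (\tau_2 + |n_2|^2) = |n|^2 + |n_1|^2 + |n_2|^2,
\end{align*}
which follows from $n + n_1 + n_2 = 0$ and $\tau + \tau_1 + \tau_2 = 0$. Under the hypothesis $N_1 \sim N_2 \gg N$ the right-hand side is $\ges N_1^2$, so at least one of the three modulations must be $\ges N_1^2$. This is a substantially larger gain than the $\ges N^2$ available in Lemma \ref{LEM:HL}, and reflects the favorable same-sign structure of the phase function for the $\cj u^2$ nonlinearity (compare Remark \ref{RMK:uu}). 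Based on which of the three modulations dominates, I split the proof into three cases.

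In Case 1, $|\tau + |n|^2| \ges N_1^2$, so the output lies in the high-modulation region and I measure its $\ft Z^{s, 2/3}$-norm by the $\ft Y^{s, 2/3}$-part. The $\l_n^2 L_\tau^2$-piece carries the weight $\jb{\tau + |n|^2}^{s/2 - 1/3} \les N_1^{s - 2/3}$ on this region, which reduces the estimate via Plancherel to bounding $\| \varphi_T \cj{u_{N_1}} \cdot \varphi_T \cj{v_{N_2}} \|_{L^2_{t, x}}$. When both inputs lie in their low-modulation ($X^{s, 2/3}$) parts, I use the H\"older pairing $L^4 \cdot L^4$ with Lemma \ref{LEM:L4} and Lemma \ref{LEM:time} exactly as in Subcase 1.3 of Lemma \ref{LEM:HL}. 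When an input lies in its high-modulation ($Y^{s, 2/3}$) part, I instead use an $L^2 \cdot L^\infty$ (or interpolated $L^4 \cdot L^4$) pairing, with the $L^2$-factor controlled by the modulation-gain bound
\begin{align*}
\| \varphi_T u^{\textup{hi}}_{N_1} \|_{L^2_{t, x}} \les T^\eps N_1^{-s - 4/3 + 2\eps} \| u_{N_1} \|_{Y^{s, 2/3}},
\end{align*}
which follows from Lemma \ref{LEM:time} applied with $b_1 = 0$ and $b_2 = \eps$ (using $X^{0, 0} = L^2_{t, x}$) together with the pointwise bound $\jb{\tau_1 + |n_1|^2}^{\eps - s/2 - 2/3} \les N_1^{2\eps - s - 4/3}$ on the high-modulation support, and the $L^\infty$-factor handled by Bernstein's inequality combined with Lemma \ref{LEM:embed}. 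The $\l_n^2 L_\tau^1$-piece of $\ft Y^{s, 2/3}$ is then treated by Cauchy--Schwarz in $\tau$, using integrability of $\jb{\tau + |n|^2}^{-1/2 - \eps}$ on $|\tau + |n|^2| \ges N_1^2$ to reduce to the same $L^2_{t, x}$-bound.

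Cases 2 and 3 are $|\tau_i + |n_i|^2| \ges N_1^2$ for $i = 1$ or $2$; by symmetry I treat Case 2. Here I bound the output in the $\ft X^{s, 2/3}$-norm, which dominates the $\ft Z^{s, 2/3}$-norm by Lemma \ref{LEM:est2}. Duality reduces the estimate to bounding the trilinear integral $\int \varphi_T \cj{u_{N_1}} \cdot \varphi_T \cj{v_{N_2}} \cdot \cj{\psi} \, dx dt$, where $\ft \psi := h / \jb{\tau + |n|^2}^{1/3}$ with $h$ normalized in $\l_n^2 L_\tau^2(\Pf_N)$; in particular $\| \psi \|_{X^{0, 1/3}} \leq 1$, so Lemma \ref{LEM:L4} yields $\| \psi \|_{L^4_{t, x}} \les N^{1/3 + \eps}$. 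Applying H\"older as $L^2 \cdot L^4 \cdot L^4$, the high-modulation factor $\varphi_T u_{N_1}$ enters in $L^2$ via the displayed bound above, while $\varphi_T v_{N_2}$ enters in $L^4$ via Lemma \ref{LEM:L4} and Lemma \ref{LEM:time} (with an additional modulation-gain conversion if $v_{N_2}$ is itself in its high-modulation part). This runs parallel to Subcase 2.2 of Lemma \ref{LEM:HL}.

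Collecting, the net $N_1$-exponent in every subcase has the form $-\alpha + O(\eps)$ with $\alpha > 0$ when $s > -\frac 2 3$, saturating at $\alpha = -s - \frac 2 3$ in the $(X,X)$ subcase of Case 1; using $N_1 \geq 2^5 N$ this converts to $N^{-\dl}$ for some $\dl > 0$, and the $T^\ta$-factor is supplied by Lemma \ref{LEM:time}. The main obstacle is that Lemma \ref{LEM:L4} does not directly control the $Y^{s, 2/3}$-norm of an input, because the required Bourgain weight $\jb{\tau + |n|^2}^{1/2 - \eps}$ exceeds the $Y$-weight $\jb{\tau + |n|^2}^{s/2 + 2/3}$ in the relevant range $-\frac 2 3 < s \leq -\frac 1 2$; this is why I route the $Y$-controlled factors through $L^2_{t, x}$ using the modulation gain and pair them with $L^\infty$ or interpolated $L^4$ estimates on the remaining factor.
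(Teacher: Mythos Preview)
Your overall strategy---splitting according to which of the three modulations dominates via the phase identity---matches the paper's, and your Case 1 is essentially identical to the paper's Case 1 (the paper uses Young's inequality on the Fourier side where you use $L^2 \cdot L^\infty$ in physical space, but these give the same exponents). Your device in Cases 2--3 of bounding the output in $\ft X^{s,2/3}$ via Lemma \ref{LEM:est2}, thereby avoiding the paper's further split into Subcases $|\tau+|n|^2| \lessgtr 2^{-10}|n|^2$, is a genuine streamlining.

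There is, however, a gap in your Case 2 when $v_{N_2}$ lies in its \emph{high}-modulation part. You write that $\varphi_T v_{N_2}$ ``enters in $L^4$ via Lemma \ref{LEM:L4} and Lemma \ref{LEM:time} (with an additional modulation-gain conversion).'' But the natural reading of this---use Lemma \ref{LEM:L4} with $b = \tfrac12-\eps$ and then convert $X^{0,1/2-\eps}$ to $Y^{s,2/3}$ via the modulation lower bound---fails, because the exponent difference $\tfrac12-\eps - (\tfrac s2 + \tfrac23) = -\tfrac16 - \tfrac s2 - \eps$ is \emph{positive} for $s \le -\tfrac12$, so the conversion factor grows with the modulation. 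This is precisely the obstacle you flag in your final paragraph, but it bites the $L^4$ factor just as much as the $L^2$ one. The fix is easy once seen: apply Lemma \ref{LEM:L4} with the \emph{smaller} exponent $b = \tfrac13 + \eps'$, incurring a loss $N_2^{1/3+\eps}$; then $\|v_{N_2}\|_{X^{0,1/3+\eps'}} \le \|v_{N_2}\|_{Y^{s,2/3}}$ holds pointwise since $\tfrac13 + \eps' < \tfrac s2 + \tfrac23$ exactly when $s > -\tfrac23$. The resulting $N_1$-exponent $-s-1+O(\eps)$ is still negative. Alternatively, the paper handles this $(Y,Y)$ subcase (its Case 2) without any Strichartz input: it bounds the convolution in $\ell^\infty_n L^\infty_\tau$ by Young and pairs against the explicitly computable $\|\jb{n}^s \jb{\tau+|n|^2}^{-1/3}\|_{\ell^2_n L^2_\tau(\Pf_N)} \sim N^{s+4/3}$, exploiting the small output region. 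Your reference to Subcase 2.2 of Lemma \ref{LEM:HL} as the model is slightly misleading here, since that subcase has the second input in its \emph{low}-modulation part; the correct analogue for $(Y,Y)$ is Subcase 2.1 of Lemma \ref{LEM:HL}.

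A minor slip: in your final paragraph the saturating value should be $\alpha = s + \tfrac23$, not $-s-\tfrac23$.
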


\begin{proof}
As in the proof of the previous lemma, we use $(n_1, \tau_1)$ as the variables of $\cj{\varphi_T u_{N_1}}$ or $\cj{u_{N_1}}$, and $(n_2, \tau_2)$ as the variables of $\cj{\varphi_T v_{N_2}}$ or $\cj{v_{N_2}}$. Note that we have the relations $\tau + \tau_1 + \tau_2 = 0$ and $n + n_1 + n_2 = 0$. Also, the assumptions on the sizes of $N$, $N_1$, and $N_2$ ensure that $n_1 \neq 0$ and $n_2 \neq 0$. We also recall the notation $\wt{f} (x) = f(-x)$.

We consider the following four main cases.

\medskip \noi
\textbf{Case 1:} $|\tau + |n|^2| \geq 2^{-10} |n_1|^2$.

In this case, we have $|\tau + |n|^2| \geq 2^{-10} |n_1|^2 \geq 2^{-10} |n|^2$ given $N < 2^{-5} N_1$, so that we need to evaluate the $\F_{x, t} \big( \varphi_T \cj{u_{N_1}} \cdot \varphi_T \cj{v_{N_2}} \big)$ term using the $\ft{Y}^{s, \frac 23}$-norm, and we need to evaluate both the $\l_n^2 L_\tau^1$ term and the $\l_n^2 L_\tau^2$ term. We consider the following three subcases.

\medskip \noi
\textbf{Subcase 1.1:} $|\tau_1 + |n_1|^2| \geq 2^{-10} |n_1|^2$.

In this subcase, we need to estimate $u_{N_1}$ using the $Y^{s, \frac 23}$-norm. By Young's convolution inequality, Lemma \ref{LEM:time}, the Cauchy-Schwarz inequality, and Lemma \ref{LEM:est1}, we obtain
\begin{align*}
\big\| &\jb{\tau + |n|^2}^{\frac{s}{2} - \frac 13} \F_{x, t} \big( \varphi_T \cj{u_{N_1}} \cdot \varphi_T \cj{v_{N_2}} \big) \big\|_{\l_n^2 L_\tau^2 (\Pf_N)} \\
&= \bigg\| \jb{\tau + |n|^2}^{\frac{s}{2} - \frac 13} \wt{\ft{\varphi_T^2 u_{N_1}}} * \wt{\ft{v_{N_2}}} \bigg\|_{\l_n^2 L_\tau^2 (\Pf_N)} \\
&\les N_1^{s - \frac 23} \Big\| \ft{\varphi_T^2 u_{N_1}} \Big\|_{\l_{n_1}^2 L_{\tau_1}^2} \big\| \ft{v_{N_2}} \big\|_{\l_{n_2}^1 L_{\tau_2}^1} \\
&\les_\varphi N_1^{s - \frac 23} T^\eps \big\| \jb{\tau_1 + |n_1|^2}^\eps \ft{u_{N_1}} \big\|_{\l_{n_1}^2 L_{\tau_1}^2} N_2^{-s + 1} \big\| \jb{n_2}^s \ft{v_{N_2}} \big\|_{\l_{n_2}^2 L_{\tau_2}^1} \\
&\les N_1^{s - \frac 23} T^\eps N_1^{-s - \frac 43 + 2\eps} \| u_{N_1} \|_{Y^{s, \frac 23}} N_2^{-s + 1} \| v_{N_2} \|_{Z^{s, \frac 23}} \\
&\les N_1^{-s - 1 + 2 \eps} T^\eps \| u_{N_1} \|_{Z^{s, \frac 23}} \| v_{N_2} \|_{Z^{s, \frac 23}},
\end{align*}

\noi
which is acceptable given $s > - \frac 23$ and $\eps > 0$ sufficiently small.

Also, by H\"older's inequality, Young's convolution inequality, Lemma \ref{LEM:time}, and Lemma \ref{LEM:est1}, we have
\begin{align*}
\big\| &\jb{n}^s \jb{\tau + |n|^2}^{-1} \F_{x, t} \big( \varphi_T \cj{u_{N_1}} \cdot \varphi_T \cj{v_{N_2}} \big) \big\|_{\l_n^2 L_\tau^1 (\Pf_N)} \\
&= \bigg\| \jb{n}^s \jb{\tau + |n|^2}^{-1} \wt{\ft{\varphi_T^2 u_{N_1}}} * \wt{\ft{v_{N_2}}} \bigg\|_{\l_n^2 L_\tau^1 (\Pf_N)} \\
&\les N_1^{- 1 + 2 \eps} \big\| \jb{n}^s \jb{\tau + |n|^2}^{-\frac 12 - \eps} \big\|_{\l_n^2 L_\tau^2 (\Pf_N)} \Big\| \ft{\varphi_T^2 u_{N_1}} \Big\|_{\l_{n_1}^2 L_{\tau_1}^2} \big\| \ft{v_{N_2}} \big\|_{\l_{n_2}^2 L_{\tau_2}^1} \\
&\les_\varphi N_1^{- 1 + 2 \eps} N^{s + 1} T^\eps \big\| \jb{\tau_1 + |n_1|^2}^\eps \ft{u_{N_1}} \big\|_{\l_{n_2}^2 L_{\tau_2}^2} N_2^{-s} \big\| \jb{n_2}^s \ft{v_{N_2}} \big\|_{\l_{n_2}^2 L_{\tau_2}^1} \\
&\les N^{s + 1} N_1^{-s - 1 + 2\eps} T^\eps N_1^{-s - \frac 43 + 2 \eps} \| u_{N_1} \|_{Y^{s, \frac 23}} \| v_{N_2} \|_{Z^{s, \frac 23}} \\
&\les N^{s + 1} N_1^{-2s - \frac 73 + 4 \eps} T^\eps \| u_{N_1} \|_{Z^{s, \frac 23}} \| v_{N_2} \|_{Z^{s, \frac 23}},
\end{align*}

\noi
where $\eps > 0$ is arbitrarily small. Since $s + 1 > 0$ given $s > - \frac 23$, the above estimate is acceptable if $-s - \frac 43 + 4 \eps < 0$, which is valid given $s > -\frac 23$ and $\eps > 0$ small enough. Combining the above two estimates, we obtain the desired inequality.

\medskip \noi
\textbf{Subcase 1.2:} $|\tau_2 + |n_2|^2| \geq 2^{-10} |n_2|^2$.

This subcase is similar to Subcase 1.1 by switching the roles of $u_{N_1}$ and $v_{N_2}$, and so we omit details.

\medskip \noi
\textbf{Subcase 1.3:} $|\tau_1 + |n_1|^2| < 2^{-10} |n_1|^2$ and $|\tau_2 + |n_2|^2| < 2^{-10} |n_2|^2$.

In this subcase, we need to estimate both $u_{N_1}$ and $v_{N_2}$ using the $X^{s, \frac 23}$-norm. Using the fact that $\varphi_T$ is supported on $[-1, 1]$ given $0 < T \leq \frac 12$, by the Plancherel theorem, H\"older's inequality, Lemma \ref{LEM:L4}, and Lemma \ref{LEM:time}, we obtain
\begin{align*}
\big\| &\jb{\tau + |n|^2}^{\frac{s}{2} - \frac 13} \F_{x, t} \big( \varphi_T \cj{u_{N_1}} \cdot \varphi_T \cj{v_{N_2}} \big) \big\|_{\l_n^2 L_\tau^2 (\Pf_N)} \\
&\les N_1^{s - \frac 23} \| \varphi_T \cj{u_{N_1}} \cdot \varphi_T \cj{v_{N_2}} \|_{L^2_t ([-1, 1]; L^2_x (\T^2))} \\
&\les N_1^{s - \frac 23} \| \varphi_T u_{N_1} \|_{L^4_t ([-1, 1]; L^4_x (\T^2))} \| \varphi_T v_{N_2} \|_{L^4_t ([-1, 1]; L^4_x (\T^2))} \\
&\les N_1^{s - \frac 23} N_1^{4 \eps} \| \varphi_T u_{N_1} \|_{X^{0, \frac 12 - \eps}} N_2^{4 \eps} \| \varphi_T v_{N_2} \|_{X^{0, \frac 12 - \eps}} \\
&\les_\varphi N_1^{s - \frac 23} N_1^{-s + 4 \eps} T^{\frac{\eps}{2}} \| u_{N_1} \|_{X^{s, \frac 23}} N_2^{-s + 4 \eps} T^{\frac{\eps}{2}} \| v_{N_2} \|_{X^{s, \frac 23}} \\
&\les N_1^{-s - \frac 23 + 8 \eps} T^\eps \| u_{N_1} \|_{Z^{s, \frac 23}} \| v_{N_2} \|_{Z^{s, \frac 23}},
\end{align*}

\noi
where $\eps > 0$ is arbitrarily small. The above estimate is acceptable if $-s - \frac 23 + 8 \eps < 0$, which is valid given $s > -\frac 23$ and $\eps > 0$ small enough.

Regarding the $\l_n^2 L_\tau^1$ norm of the $\F_{x, t} \big( \varphi_T \cj{u_{N_1}} \cdot \varphi_T \cj{v_{N_2}} \big)$ term, we first let $\eps_1, \eps_2 > 0$ satisfying 
\begin{align*}
1 + \frac{1}{1 + \eps_1} = \frac{2}{1 + \eps_2}.
\end{align*}

\noi
Note that both $\eps_1$ and $\eps_2$ can be arbitrarily small. By H\"older's inequality, Young's convolution inequality, H\"older's inequalities twice, and Lemma \ref{LEM:time}, we have
\begin{align*}
\big\| &\jb{n}^s \jb{\tau + |n|^2}^{- 1} \F_{x, t} \big( \varphi_T \cj{u_{N_1}} \cdot \varphi_T \cj{v_{N_2}} \big) \big\|_{\l_n^2 L_\tau^1 (\Pf_N)} \\
&= \Big\| \jb{n}^s \jb{\tau + |n|^2}^{- 1} \wt{\ft{\varphi_T u_{N_1}}} * \wt{\ft{\varphi_T v_{N_2}}} \Big\|_{\l_n^2 L_\tau^1 (\Pf_N)} \\
&\les N_1^{- 2 + 2 \eps_1} \big\| \jb{n}^s \jb{\tau + |n|^2}^{- \eps_1} \big\|_{\l_n^2 L_\tau^{(1 + \eps_1) / \eps_1} (\Pf_N)} \big\| \ft{\varphi_T u_{N_1}} \big\|_{\l_{n_1}^2 L_{\tau_1}^{1 + \eps_2}} \big\| \ft{\varphi_T v_{N_2}} \big\|_{\l_{n_2}^2 L_{\tau_2}^{1 + \eps_2}} \\
&\les N_1^{- 2 + 2 \eps_1} N^{s + 1} \big\| \jb{\tau_1 + |n_1|^2}^{\frac{1}{2 + 2 \eps_1} +} \ft{\varphi_T u_{N_1}} \big\|_{\l_{n_1}^2 L_{\tau_1}^2} \big\| \jb{\tau_2 + |n_2|^2}^{\frac{1}{2 + 2 \eps_1} +} \ft{\varphi_T v_{N_2}} \big\|_{\l_{n_2}^2 L_{\tau_2}^2} \\
&\les_\varphi N^{s + 1} N_1^{- 2 + 2 \eps_1} T^\ta N_1^{-s} \| u_{N_1} \|_{X^{s, \frac 23}} T^{\ta} N_2^{-s} \| v_{N_2} \|_{X^{s, \frac 23}} \\
&\les N^{s + 1} N_1^{- 2s - 2 + 2 \eps_1} T^{2 \ta} \| u_{N_1} \|_{Z^{s, \frac 23}} \| v_{N_2} \|_{Z^{s, \frac 23}}
\end{align*}

\noi
for some $\ta > 0$. Since $s + 1 > 0$ given $s > - \frac 23$, the above estimate is acceptable if $-s - 1 + 2 \eps_1 < 0$, which is valid given $s > - \frac 23$ and $\eps_1 > 0$ close enough to 0. Combining the above two estimates, we obtain the desired inequality.

\medskip \noi
\textbf{Case 2:} $|\tau + |n|^2| < 2^{-10} |n_1|^2$, $|\tau_1 + |n_1|^2| \geq 2^{-10} |n_1|^2$, and $|\tau_2 + |n_2|^2| \geq 2^{-10} |n_2|^2$.

In this case, we need to estimate both $u_{N_1}$ and $v_{N_2}$ using the $Y^{s, \frac 23}$-norm. We consider the following two subcases.

\medskip \noi
\textbf{Subcase 2.1:} $|\tau + |n|^2| < 2^{-10} |n|^2$.

In this subcase, we need to evaluate the $\F_{x, t} \big( \varphi_T \cj{u_{N_1}} \cdot \varphi_T \cj{v_{N_2}} \big)$ term using the $\ft{X}^{s, \frac 23}$-norm. By H\"older's inequality, Young's convolution inequality, and Lemma \ref{LEM:time}, we have
\begin{align*}
\big\| &\jb{n}^s \jb{\tau + |n|^2}^{- \frac 13} \F_{x, t} \big( \varphi_T \cj{u_{N_1}} \cdot \varphi_T \cj{v_{N_2}} \big) \big\|_{\l_n^2 L_\tau^2 (\Pf_N)} \\
&= \Big\| \jb{n}^s \jb{\tau + |n|^2}^{- \frac 13} \wt{\ft{\varphi_T u_{N_1}}} * \wt{\ft{\varphi_T v_{N_2}}} \Big\|_{\l_n^2 L_\tau^2 (\Pf_N)} \\
&\les \big\| \jb{n}^s \jb{\tau + |n|^2}^{- \frac 13} \big\|_{\l_n^2 L_\tau^2 (\Pf_N)} \big\| \ft{\varphi_T u_{N_1}} \big\|_{\l_{n_1}^2 L_{\tau_1}^2} \big\| \ft{\varphi_T v_{N_2}} \big\|_{\l_{n_2}^2 L_{\tau_2}^2} \\
&\les_\varphi N^{s + 1} N^{\frac 13} T^\eps \big\| \jb{\tau_1 + |n_1|^2}^\eps \ft{u_{N_1}} \big\|_{\l_{n_1}^2 L_{\tau_1}^2} T^\eps \big\| \jb{\tau_2 + |n_2|^2}^\eps \ft{v_{N_2}} \big\|_{\l_{n_2}^2 L_{\tau_2}^2} \\
&\les N^{s + \frac 43} T^\eps N_1^{-s - \frac 43 + 2 \eps} \| u_{N_1} \|_{Y^{s, \frac 23}} T^\eps N_2^{-s - \frac 43 + 2 \eps} \| v_{N_2} \|_{Y^{s, \frac 23}} \\
&\les N^{s + \frac 43} N_1^{-2s - \frac 83 + 4 \eps} T^{2 \eps} \| u_{N_1} \|_{Z^{s, \frac 23}} \| v_{N_2} \|_{Z^{s, \frac 23}},
\end{align*}

\noi
where $\eps > 0$ is arbitrarily small. Since $s > -\frac 23$, we have $s + \frac 43 > 0$. Thus, the above estimate is acceptable if $- s - \frac 43 + 4 \eps < 0$, which is valid given $s > -\frac 23$.

\medskip \noi
\textbf{Subcase 2.2:} $2^{-10} |n|^2 \leq |\tau + |n|^2| < 2^{-10} |n_1|^2$.

In this subcase, we need to evaluate the $\F_{x, t} \big( \varphi_T \cj{u_{N_1}} \cdot \varphi_T \cj{v_{N_2}} \big)$ term using the $\ft{Y}^{s, \frac 23}$-norm. By H\"older's inequality, Young's convolution inequality, and Lemma \ref{LEM:time}, we have
\begin{align*}
\begin{split}
\big\| &\jb{\tau + |n|^2}^{\frac{s}{2} - \frac 13} \F_{x, t} \big( \varphi_T \cj{u_{N_1}} \cdot \varphi_T \cj{v_{N_2}} \big)  \big\|_{\l_n^2 L_\tau^2 (\Pf_N)} \\
&= \Big\| \jb{\tau + |n|^2}^{\frac{s}{2} - \frac 13} \wt{\ft{\varphi_T u_{N_1}}} * \wt{\ft{\varphi_T v_{N_2}}} \Big\|_{\l_n^2 L_\tau^2 (\Pf_N)} \\
&\les \big\| \jb{\tau + |n|^2}^{\frac{s}{2} - \frac 13} \big\|_{\l_n^2 L_\tau^2 (\Pf_N)} \big\| \ft{\varphi_T u_{N_1}} \big\|_{\l_{n_1}^2 L_{\tau_1}^2} \big\| \ft{ \varphi_T v_{N_2} } \big\|_{\l_{n_2}^2 L_{\tau_2}^2} \\
&\les_\varphi N^{s + \frac 13 + 2 \eps} \big\| \jb{\tau + |n|^2}^{-\frac 12 - \eps} \big\|_{\l_n^2 L_\tau^2 (\Pf_N)}  \\
&\qquad \times T^\eps \big\| \jb{\tau_1 + |n_1|^2}^\eps \ft{u_{N_1}} \big\|_{\l_{n_1}^2 L_{\tau_1}^2} T^\eps \big\| \jb{\tau_2 + |n_2|^2}^\eps \ft{v_{N_2}} \big\|_{\l_{n_2}^2 L_{\tau_2}^2} \\
&\les N^{s + \frac 43 + 2 \eps} T^{2 \eps} N_1^{-s - \frac 43 + 2\eps} \| u_{N_1} \|_{Y^{s, \frac 23}} N_2^{-s - \frac 43 + 2\eps} \| v_{N_2} \|_{Y^{s, \frac 23}} \\
&\les N^{s + \frac 43 + 2 \eps} N_1^{-2s - \frac 83 + 4 \eps} T^{2 \eps} \| u_{N_1} \|_{Z^{s, \frac 23}} \| v_{N_2} \|_{Z^{s, \frac 23}},
\end{split}
\end{align*}

\noi
where $\eps > 0$ is arbitrarily small. Note that the second inequality is valid since $s + \frac 13 + 2 \eps < 0$ given $s \leq - \frac 12$ and $\eps > 0$ small enough. Since $s + \frac 43 + 2 \eps > 0$ given $s > -\frac 23$, the above estimate is acceptable if $-s - \frac 43 + 6 \eps < 0$, which is valid given $s > -\frac 23$.

Also, by the Cauchy-Schwarz inequality, H\"older's inequality, Young's convolution inequality, and Lemma \ref{LEM:time}, we get
\begin{align*}
\big\| &\jb{n}^s \jb{\tau + |n|^2}^{- 1} \F_{x, t} \big( \varphi_T \cj{u_{N_1}} \cdot \varphi_T \cj{v_{N_2}} \big) \big\|_{\l_n^2 L_\tau^1 (\Pf_N)} \\
&\les \Big\| \jb{n}^s \jb{\tau + |n|^2}^{- \frac 12 + \eps} \wt{\ft{\varphi_T u_{N_1}}} * \wt{\ft{\varphi_T v_{N_2}}} \Big\|_{\l_n^2 L_\tau^2 (\Pf_N)} \\
&\les N^s N_1^{4 \eps} \big\| \jb{\tau + |n|^2}^{-\frac 12 - \eps} \big\|_{\l_n^2 L_\tau^2 (\Pf_N)} \big\| \ft{\varphi_T u_{N_1}} \big\|_{\l_{n_1}^2 L_{\tau_1}^2} \big\| \ft{ \varphi_T v_{N_2} } \big\|_{\l_{n_2}^2 L_{\tau_2}^2} \\
&\les_\varphi N^{s + 1} N_1^{4 \eps} T^\eps \big\| \jb{\tau_1 + |n_1|^2}^\eps \ft{u_{N_1}} \big\|_{\l_{n_1}^2 L_{\tau_1}^2} T^\eps \big\| \jb{\tau_2 + |n_2|^2}^\eps \ft{v_{N_2}} \big\|_{\l_{n_2}^2 L_{\tau_2}^2} \\
&\les N^{s + 1} N_1^{4 \eps} T^{2 \eps} N_1^{-s - \frac 43 + 2 \eps} \| u_{N_1} \|_{Y^{s, \frac 23}} N_2^{-s - \frac 43 + 2\eps} \| v_{N_2} \|_{Y^{s, \frac 23}} \\
&\les N^{s + 1} N_1^{-2s - \frac 83 + 8 \eps} T^{2 \eps} \| u_{N_1} \|_{Z^{s, \frac 23}} \| v_{N_2} \|_{Z^{s, \frac 23}},
\end{align*}

\noi
where $\eps > 0$ is arbitrarily small. Since $s + 1 > 0$ given $s > -\frac 23$, the above estimate is acceptable if $-s - \frac 53 + 8 \eps < 0$, which is valid given $s > -\frac 23$ and $\eps > 0$ small enough. Combining the above two estimates, we obtain the desired inequality.

\medskip \noi
\textbf{Case 3:} $|\tau + |n|^2| < 2^{-10} |n_1|^2$ and $|\tau_1 + |n_1|^2| < 2^{-10} |n_1|^2$.

In this case, we need to estimate $u_{N_1}$ using the $X^{s, \frac 23}$-norm. Note that we have
\begin{align*}
\tau_2 &= - \tau - \tau_1 \\
&= (-\tau - |n|^2) + (-\tau_1 - |n_1|^2) + |n|^2 + |n_1|^2 \\
&> - 2^{-10} |n_1|^2 - 2^{-10} |n_1|^2 + |n_1|^2 \\
&> 0,
\end{align*}

\noi
and so $|\tau_2 + |n_2|^2| > |n_2|^2 > 2^{-10} |n_2|^2$. Thus, we need to estimate $v_{N_2}$ using the $Y^{s, \frac 23}$-norm. We consider the following two subcases.

\medskip \noi
\textbf{Subcase 3.1:} $|\tau + |n|^2| < 2^{-10} |n|^2$.

In this subcase, we need to evaluate the $\F_{x, t} \big( \varphi_T \cj{u_{N_1}} \cdot \varphi_T \cj{v_{N_2}} \big)$ term using the $\ft{X}^{s, \frac 23}$-norm. By duality and the Cauchy-Schwarz inequality, we have
\begin{align}
\begin{split}
\big\| &\jb{n}^s \jb{\tau + |n|^2}^{- \frac 13} \F_{x, t} \big( \varphi_T \cj{u_{N_1}} \cdot \varphi_T \cj{v_{N_2}} \big) \big\|_{\l_n^2 L_\tau^2 ( \Pf_N )} \\
&\les N^s \sup_{\| h \|_{\l_n^2 L_\tau^2 ( \Pf_N )} \leq 1} \bigg| \sum_{\substack{n, n_1, n_2 \in \Z^2 \\ n + n_1 + n_2 = 0}} \iint_{\tau + \tau_1 + \tau_2 = 0} \ft{\varphi_T u_{N_1}} (n_1, \tau_1) \ft{\varphi_T v_{N_2}} (n_2, \tau_2) \\
&\qquad \times \frac{h (n, \tau)}{\jb{\tau + |n|^2}^{\frac 13}} d\tau d\tau_2 \bigg| \\
&\leq N^{s} \| \ft{\varphi_T v_{N_2} } \|_{\l_{n_2}^2 L_{\tau_2}^2} \sup_{\| h \|_{\l_n^2 L_\tau^2 ( \Pf_N )} \leq 1} \bigg\| \sum_{\substack{n, n_1 \in \Z^2 \\ n + n_1 + n_2 = 0}} \int_{\tau + \tau_1 + \tau_2 = 0} \ft{\varphi_T u_{N_1}} (n_1, \tau_1) \\
&\qquad \times \frac{h (n, \tau)}{\jb{\tau + |n|^2}^{\frac 13}} d\tau \bigg\|_{\l_{n_2}^2 L_{\tau_2}^2}.
\end{split}
\label{HH3-1}
\end{align}

\noi
Let $w_N$ be a space-time distribution that satisfy $\ft{w_N} (n, \tau) = h (n, \tau ) / \jb{\tau + |n|^2}^{\frac 13}$. Then, using the fact that $\varphi_T$ is supported on $[-1, 1]$ given $0 < T \leq \frac 12$, by the Plancherel theorem, H\"older's inequality, Lemma \ref{LEM:L4}, and Lemma \ref{LEM:time}, we have
\begin{align*}
\begin{split}
\bigg\| &\sum_{\substack{n, n_1 \in \Z^2 \\ n + n_1 + n_2 = 0}} \int_{\tau + \tau_1 + \tau_2 = 0} \ft{\varphi_T u_{N_1}} (n_1, \tau_1) \frac{h (n, \tau)}{\jb{\tau + |n|^2}^{\frac 13}} d\tau \bigg\|_{\l_{n_2}^2 L_{\tau_2}^2} \\
&= \| \varphi_T u_{N_1} \wt{w_N} \|_{L_t^2 ([-1, 1]; L_x^2 (\T^2))} \\
&\les \| \varphi_T u_{N_1} \|_{L_t^4 ([-1, 1]; L_x^4 (\T^2))} \| w_N \|_{L_t^4 ([-1, 1]; L_x^4 (\T^2))} \\
&\les N_1^{4\eps} \| \varphi_T u_{N_1} \|_{X^{0, \frac 12 - \eps}} N^{\frac 13 + \eps} \| w_N \|_{X^{0, \frac 13}} \\
&\les_\varphi N_1^{-s + 4 \eps} T^{\frac{\eps}{2}} \| u_{N_1} \|_{X^{s, \frac 23}} N^{\frac 13 + \eps} \| h \|_{\l_n^2 L_\tau^2 ( \Pf_N )},
\end{split}
\end{align*}

\noi
where $\eps > 0$ is arbitrarily small. Thus, continuing with \eqref{HH3-1}, we use Lemma \ref{LEM:time} to obtain
\begin{align*}
\big\| &\jb{n}^s \jb{\tau + |n|^2}^{- \frac 13} \F_{x, t} \big( \varphi_T \cj{u_{N_1}} \cdot \varphi_T \cj{v_{N_2}} \big) \big\|_{\l_n^2 L_\tau^2 ( \Pf_N )} \\
&\les_\varphi N^{s + \frac 13 + \eps} N_1^{-s + 4 \eps} T^{\frac{\eps}{2}} \| u_{N_1} \|_{X^{s, \frac 23}} \big\| \ft{\varphi_T v_{N_2}} \big\|_{\l_{n_2}^2 L_{\tau_2}^2} \\
&\les_\varphi N^{s + \frac 13 + \eps} N_1^{-s + 4 \eps} T^\eps \| u_{N_1} \|_{Z^{s, \frac 23}} \big\| \jb{\tau_2 + |n_2|^2}^{\frac{\eps}{2}} \ft{v_{N_2}} \big\|_{\l_{n_2}^2 L_{\tau_2}^2} \\
&\les N^{s + \frac 13 + \eps} N_1^{-s + 4 \eps} T^\eps \| u_{N_1} \|_{Z^{s, \frac 23}} N_2^{-s - \frac 43 + \eps} \| v_{N_2} \|_{Y^{s, \frac 23}} \\
&\les N^{s + \frac 13 + \eps} N_1^{-2s - \frac 43 + 5 \eps} T^\eps \| u_{N_1} \|_{Z^{s, \frac 23}} \| v_{N_2} \|_{Z^{s, \frac 23}}.
\end{align*}

\noi
Since $s \leq -\frac 12$, we have $s + \frac 13 + \eps < 0$ for $\eps > 0$ small enough. Thus, the above estimate is acceptable if $-2s - \frac 43 + 5 \eps \leq 0$, which is valid given $s > -\frac 23$ and $\eps > 0$ sufficiently small.

\medskip \noi
\textbf{Subcase 3.2:} $2^{-10}|n|^2 \leq |\tau + |n|^2| < 2^{-10} |n_1|^2$.

In this subcase, we need to evaluate the $\F_{x, t} \big( \varphi_T \cj{u_{N_1}} \cdot \varphi_T \cj{v_{N_2}} \big)$ term using the $Y^{s, \frac 23}$-norm. By duality and the Cauchy-Schwarz inequality, we have
\begin{align}
\begin{split}
\big\| &\jb{\tau + |n|^2}^{\frac{s}{2} - \frac 13} \F_{x, t} \big( \varphi_T \cj{u_{N_1}} \cdot \varphi_T \cj{v_{N_2}} \big) \big\|_{\l_n^2 L_\tau^2 (\Pf_N)} \\
&\les N^{s + \frac 13} \sup_{\| h \|_{\l_n^2 L_\tau^2 ( \Pf_N )} \leq 1} \bigg| \sum_{\substack{n, n_1, n_2 \in \Z^2 \\ n + n_1 + n_2 = 0}} \iint_{\tau + \tau_1 + \tau_2 = 0} \ft{\varphi_T u_{N_1}} (n_1, \tau_1) \ft{\varphi_T v_{N_2}} (n_2, \tau_2) \\
&\qquad \times \frac{h (n, \tau)}{\jb{\tau + |n|^2}^{\frac 12}} d\tau d\tau_2 \bigg| \\
&\leq N^{s + \frac 13} \| \ft{\varphi_T v_{N_2}} \|_{\l_{n_2}^2 L_{\tau_2}^2} \sup_{\| h \|_{\l_n^2 L_\tau^2 ( \Pf_N )} \leq 1} \bigg\| \sum_{\substack{n, n_1 \in \Z^2 \\ n + n_1 + n_2 = 0}} \int_{\tau + \tau_1 + \tau_2 = 0} \ft{\varphi_T u_{N_1}} (n_1, \tau_1) \\
&\qquad \times \frac{h (n, \tau)}{\jb{\tau + |n|^2}^{\frac 12}} d\tau \bigg\|_{\l_{n_2}^2 L_{\tau_2}^2}.
\end{split}
\label{HH3-2}
\end{align}

\noi
Note that the first inequality is valid since $s +  \frac 13 < 0$ given $s \leq - \frac 12$. Let $w_N$ be a space-time distribution that satisfy $\ft{w_N} (n, \tau) = h (n, \tau) / \jb{\tau + |n|^2}^{\frac 12}$. Then, using the fact that $\varphi_T$ is supported on $[-1, 1]$ given $0 < T \leq \frac 12$, by the Plancherel theorem, H\"older's inequality, Lemma \ref{LEM:L4}, and Lemma \ref{LEM:time}, we have
\begin{align*}
\bigg\| &\sum_{\substack{n, n_1 \in \Z^2 \\ n + n_1 + n_2 = 0}} \int_{\tau + \tau_1 + \tau_2 = 0} \ft{\varphi_T u_{N_1}} (n_1, \tau_1) \times \frac{h (n, \tau)}{\jb{\tau + |n|^2}^{\frac 12}} d\tau \bigg\|_{\l_{n_2}^2 L_{\tau_2}^2} \\
&= \| \varphi_T u_{N_1} \wt{w_N} \|_{L_t^2 ([-1, 1]; L_x^2 (\T^2))} \\
&\les \| \varphi_T u_{N_1} \|_{L_t^4 ([-1, 1]; L_x^4 (\T^2))} \| w_N \|_{L_t^4 ([-1, 1]; L_x^4 (\T^2))} \\
&\les N_1^{4 \eps} \| \varphi_T u_{N_1} \|_{X^{0, \frac 12 - \eps}} N^\eps \| w_N \|_{X^{0, \frac 12}} \\
&\les_\varphi N_1^{-s + 4 \eps} T^{\frac{\eps}{2}} \| u_{N_1} \|_{X^{s, \frac 23}} N^\eps \| h \|_{\l_n^2 L_\tau^2 (\Pf_N)},
\end{align*}

\noi
where $\eps > 0$ is arbitrarily small. Thus, continuing with \eqref{HH3-2}, we use Lemma \ref{LEM:time} to obtain
\begin{align*}
\big\| &\jb{\tau + |n|^2}^{\frac{s}{2} - \frac 13} \F_{x, t} \big( \varphi_T \cj{u_{N_1}} \cdot \varphi_T \cj{v_{N_2}} \big) \big\|_{\l_n^2 L_\tau^2 (\Pf_N)} \\
&\les_\varphi N^{s + \frac 13 + \eps} N_1^{-s + 4 \eps} T^{\frac{\eps}{2}} \| u_{N_1} \|_{X^{s, \frac 23}} \big\| \ft{\varphi_T v_{N_2}} \big\|_{\l_{n_2}^2 L_{\tau_2}^2} \\
&\les_\varphi N^{s + \frac 13 + \eps} N_1^{-s + 4 \eps} T^\eps \| u_{N_1} \|_{Z^{s, \frac 23}} \big\| \jb{\tau_2 + |n_2|^2}^{\frac{\eps}{2}} \ft{v_{N_2}} \big\|_{\l_{n_2}^2 L_{\tau_2}^2} \\
&\les N^{s + \frac 13 + \eps} N_1^{-s + 4 \eps} T^\eps \| u_{N_1} \|_{Z^{s, \frac 23}} N_2^{-s - \frac 43 + \eps} \| v_{N_2} \|_{Y^{s, \frac 23}} \\
&\les N^{s + \frac 13 + \eps} N_1^{-2s - \frac 43 + 5 \eps} T^\eps \| u_{N_1} \|_{Z^{s, \frac 23}} \| v_{N_2} \|_{Z^{s, \frac 23}}.
\end{align*}

\noi
Since $s \leq -\frac 12$, we have $s + \frac 13 + \eps < 0$ for $\eps > 0$ small enough. Thus, the above estimate is acceptable if $-2s - \frac 43 + 5 \eps \leq 0$, which is valid given $s > -\frac 23$ and $\eps > 0$ sufficiently small.

Also, by the Cauchy-Schwarz inequality, we get
\begin{align*}
\big\| &\jb{n}^s \jb{\tau + |n|^2}^{- 1} \F_{x, t} \big( \varphi_T \cj{u_{N_1}} \cdot \varphi_T \cj{v_{N_2}} \big) \big\|_{\l_n^2 L_\tau^1 (\Pf_N)} \\
&\les \big\| \jb{n}^s \jb{\tau + |n|^2}^{- \frac 12 + \eps} \F_{x, t} \big( \varphi_T \cj{u_{N_1}} \cdot \varphi_T \cj{v_{N_2}} \big) \big\|_{\l_n^2 L_\tau^2 (\Pf_N)},
\end{align*}

\noi
where $\eps > 0$ is arbitrarily small. The above term can be estimated similarly as above (along with $\jb{\tau + |n|^2}^{\eps} \les N_1^{2 \eps}$) for the $\l_n^2 L_\tau^2$ term. Combining the above two estimates, we obtain the desired inequality.

\medskip \noi
\textbf{Case 4:} $|\tau + |n|^2| < 2^{-10} |n_1|^2$ and $|\tau_2 + |n_2|^2| < 2^{-10} |n_2|^2$.

This case follows similarly from Case 3 by switching the roles of $u_{N_1}$ and $v_{N_2}$. We thus omit details.

\medskip
Thus, we have finished our proof.
\end{proof}

Before moving on to the proof of our main bilinear estimate in Proposition \ref{PROP:bilin}, we first observe that by definition of the $X^{s, b}$-norm in \eqref{Xsb} and the $Y^{s, b}$-norm in \eqref{Ysb}, we have the following decompositions:
\begin{align*}
\| u \|_{X^{s, b}}^2 &= \sum_{\substack{N \geq 1 \\ \text{dyadic}}} \| u_N \|_{X^{s, b}}^2, \\
\| u \|_{Y^{s, b}}^2 &= \sum_{\substack{N \geq 1 \\ \text{dyadic}}} \| u_N \|_{Y^{s, b}}^2.
\end{align*}

\noi
Thus, it follows that we have the following decomposition regarding the $Z^{s, b}$ norm:
\begin{align}
\begin{split}
\| u \|_{Z^{s, b}}^2 &\sim \| P_{\text{lo}} u \|_{X^{s, b}}^2 + \| P_{\text{hi}} u \|_{Y^{s, b}}^2 \\
&= \sum_{\substack{N \geq 1 \\ \text{dyadic}}} \big( \| P_{\text{lo}} u_N \|_{X^{s, b}}^2 + \| P_{\text{hi}} u_N \|_{Y^{s, b}}^2 \big) \\
&\sim \sum_{\substack{N \geq 1 \\ \text{dyadic}}} \| u_N \|_{Z^{s, b}}^2.
\end{split}
\label{dyadic}
\end{align}

\begin{proof}[Proof of Proposition \ref{PROP:bilin}]
By \eqref{dyadic}, we have
\begin{align}
\begin{split}
\big\|  &\jb{\tau + |n|^2}^{-1} \F_{x, t} \big( \varphi_T \cj{u} \cdot \varphi_T \cj{v} \big) (n, \tau)  \big\|_{\ft{Z}^{s, \frac 23}}^2 \\
&\les \sum_{\substack{N \geq 1 \\ \text{dyadic}}} \bigg( \sum_{\substack{N_1, N_2 \geq 1 \\ \text{dyadic}}} \big\| \jb{\tau + |n|^2}^{-1} \F_{x, t} \big( \varphi_T \cj{u_{N_1}} \cdot \varphi_T \cj{v_{N_2}} \big) (n, \tau) \big\|_{\ft{Z}^{s, \frac 23} (\Pf_N)} \bigg)^2.
\end{split}
\label{decomp}
\end{align}

\noi
For each nonzero summand on the right-hand side of \eqref{decomp}, we know that $N$, $N_1$, and $N_2$ must satisfy one of the following:
\begin{itemize}
\item[1.] $2^{-5} N \leq N_1 \leq 2^5 N$ and $N_2 \leq 2^6 N$, 

\item[2.] $2^{-5} N \leq N_2 \leq 2^5 N$ and $N_1 \leq 2^6 N$,

\item[3.] $\frac 12 N_1 \leq N_2 \leq 2N_1$ and $N < 2^{-5} N_1$.
\end{itemize}

\noi
We now treat the above three cases separately.

\medskip \noi
\textbf{Case 1:} $2^{-5} N \leq N_1 \leq 2^5 N$ and $N_2 \leq 2^6 N$.

In this case, by Lemma \ref{LEM:HL}, the Cauchy-Schwarz inequality, and \eqref{dyadic} twice, we have
\begin{align*}
\eqref{decomp} &\les \sum_{\substack{N \geq 1 \\ \text{dyadic}}} \bigg(  \sum_{\substack{N_1 \geq 1 \text{ dyadic} \\ 2^{-5}  \leq N_1/N \leq 2^5 }} \sum_{\substack{N_2 \geq 1 \text{ dyadic} \\ N_2 \leq 2^6 N}} N_2^{-\dl} T^\ta \| u_{N_1} \|_{Z^{s, \frac 23}} \| v_{N_2} \|_{Z^{s, \frac 23}}  \bigg)^2 \\
&\les T^{2 \ta} \sum_{\substack{N \geq 1 \\ \text{dyadic}}} \bigg(  \sum_{\substack{N_1 \geq 1 \text{ dyadic} \\ 2^{-5}  \leq N_1/N \leq 2^5 }} \| u_{N_1} \|_{Z^{s, \frac 23}} \bigg( \sum_{\substack{N_2 \geq 1 \\ \text{dyadic}}} \| v_{N_2} \|_{Z^{s, \frac 23}}^2 \bigg)^{1/2} \bigg)^2 \\
&\les T^{2 \ta} \sum_{\substack{N \geq 1 \\ \text{dyadic}}} \sum_{\substack{N_1 \geq 1 \text{ dyadic} \\ 2^{-5}  \leq N_1/N \leq 2^5 }} \| u_{N_1} \|_{Z^{s, \frac 23}}^2 \| v \|_{Z^{s, \frac 23}}^2 \\
&\les T^{2 \ta} \| u \|_{Z^{s, \frac 23}}^2 \| v \|_{Z^{s, \frac 23}}^2,
\end{align*}

\noi
where in the right-hand side of the first inequality we have $\dl > 0$.

\medskip \noi
\textbf{Case 2:} $2^{-5} N \leq N_2 \leq 2^5 N$ and $N_1 \leq 2^6 N$.

This case can be treated in the same way as Case 1, and so we omit details.

\medskip \noi
\textbf{Case 3:} $\frac 12 N_1 \leq N_2 \leq 2N_1$ and $N < 2^{-5} N_1$.

In this case, by Lemma \ref{LEM:HH}, the Cauchy-Schwarz inequality, and \eqref{dyadic} twice, we have
\begin{align*}
\eqref{decomp} &\les \sum_{\substack{N \geq 1 \\ \text{dyadic}}} \bigg( \sum_{\substack{N_1 \geq 1 \text{ dyadic} \\ N_1 > 2^5 N}} \sum_{\substack{N_2 \geq 1 \text{ dyadic} \\ 1/2 \leq N_2 / N_1 \leq 2}} N^{-\dl} T^\ta \| u_{N_1}  \|_{Z^{s, \frac 23}} \| v_{N_2} \|_{Z^{s, \frac 23}} \bigg)^2 \\
&\les T^{2 \ta} \sum_{\substack{N \geq 1 \\ \text{dyadic}}} N^{-2 \dl} \bigg( \sum_{\substack{N_1 \geq 1 \\ \text{dyadic}}} \| u_{N_1} \|_{Z^{s, \frac 23}}^2 \bigg) \sum_{\substack{N_1 \geq 1 \\ \text{dyadic}}} \bigg( \sum_{\substack{N_2 \geq 1 \text{ dyadic} \\ 1/2 \leq N_2 / N_1 \leq 2}}  \| v_{N_2} \|_{Z^{s, \frac 23}}  \bigg)^2 \\
&\les T^{2 \ta} \| u \|_{Z^{s, \frac 23}}^2  \sum_{\substack{N_1 \geq 1 \\ \text{dyadic}}} \| v_{N_1} \|_{Z^{s, \frac 23}}^2 \\
&\les T^{2 \ta} \| u \|_{Z^{s, \frac 23}}^2 \| v \|_{Z^{s, \frac 23}}^2,
\end{align*}

\noi
where in the right-hand side of the first inequality we have $\dl > 0$.

\medskip
Combining the above three cases, we have thus finished our proof.
\end{proof}

\section{Local well-posedness of the quadratic NLS}
\label{SEC:LWP}
In this section, we present the proof of Theorem \ref{THM:LWP}, local well-posedness of the quadratic NLS \eqref{qNLS} in the low regularity setting. As mentioned in Section \ref{SEC:intro}, we mainly focus our attention on local well-posedness of \eqref{qNLS} on $H^s (\T^2)$ for $- \frac 23 < s \leq -\frac 12$, using the estimates of the $Z^{s, b}$-norm in Section \ref{SEC:func} and Section \ref{SEC:bilin}.

By writing \eqref{qNLS} in the Duhamel formulation, we have
\begin{align}
u(t) = \G [u] (t) := e^{it \Dl} u_0 - i  \int_0^t e^{i (t - t') \Dl} \cj{u}^2 (t') dt'.
\label{Duh}
\end{align}

\noi
Since we are only interested in local well-posedness, we can insert time cut-off functions. For $0 < T \leq \frac 12$, we let $\eta : \R \to [0, 1]$ be a smooth function such that $\eta \equiv 1$ on $[-1, 1]$ and $\eta \equiv 0$ outside of $[-2, 2]$ and let $\eta_{2T} (t) := \eta (t / 2T)$. We first replace the two $\cj{u}$'s on the right-hand side of \eqref{Duh} by $\eta_{2T} \cj{u}$. Also, note that for any function $F$ that is smooth in space and Schwartz in time, we have
\begin{align*}
\int_0^t e^{i (t - t') \Dl} F(x, t') dt' &= \int_0^t \sum_{n \in \Z^2} e^{in \cdot x} e^{-i (t - t') |n|^2} \int_\R  e^{i t' \tau} \ft{F} (n, \tau) d\tau dt' \\
&= \sum_{n \in \Z^2} e^{i n \cdot x} \int_\R e^{-it |n|^2} \ft{F} (n, \tau) \frac{e^{it (\tau + |n|^2)} - 1}{i (\tau + |n|^2)} d\tau \\
&= \sum_{n \in \Z^2} e^{i n \cdot x} \int_\R e^{-it |n|^2} \ft{F} (n, \tau) \psi (\tau + |n|^2) \frac{e^{it (\tau + |n|^2)} - 1}{i (\tau + |n|^2)} d\tau \\
&\qquad - \sum_{n \in \Z^2} e^{i n \cdot x} \int_\R e^{-it |n|^2} \ft{F} (n, \tau) \frac{1 - \psi (\tau + |n|^2)}{i(\tau + |n|^2)} d\tau \\
&\qquad + \sum_{n \in \Z^2} e^{i n \cdot x} \int_\R e^{it \tau} \ft{F} (n, \tau) \frac{1 - \psi (\tau + |n|^2)}{i(\tau + |n|^2)} d\tau,
\end{align*}

\noi
where $\psi: \R \to [0,1]$ is a smooth cut-off function such that $\psi \equiv 1$ on $[-1, 1]$ and $\psi \equiv 0$ outside of $[-2, 2]$. Let us define the following nonlinear terms. 
\begin{align}
\begin{split}
\mathcal{N}_1 (u, v) &:= -i \eta (t)  \sum_{n \in \Z^2} e^{i n \cdot x} \int_\R e^{-it |n|^2} \F_{x, t} \big( \eta_{2T} \cj{u} \cdot \eta_{2T} \cj{v} \big) (n, \tau)  \\
&\qquad \times \psi (\tau + |n|^2) \frac{e^{it (\tau + |n|^2)} - 1}{i (\tau + |n|^2)} d\tau, \\
\mathcal{N}_2 (u, v) &:= i \eta (t) \sum_{n \in \Z^2} e^{i n \cdot x} \int_\R e^{-it |n|^2} \F_{x, t} \big( \eta_{2T} \cj{u} \cdot \eta_{2T} \cj{v} \big) (n, \tau) \frac{1 - \psi (\tau + |n|^2)}{i(\tau + |n|^2)} d\tau, \\
\mathcal{N}_3 (u, v) &:= -i \sum_{n \in \Z^2} e^{i n \cdot x} \int_\R e^{it \tau} \F_{x, t} \big( \eta_{2T} \cj{u} \cdot \eta_{2T} \cj{v} \big) (n, \tau) \frac{1 - \psi (\tau + |n|^2)}{i(\tau + |n|^2)} d\tau.
\end{split}
\label{N123}
\end{align}

\noi
We consider the following formulation of the quadratic NLS \eqref{qNLS}:
\begin{align}
u(t) = \G_1 [u] (t) := \eta (t) e^{it \Dl} u_0 + \mathcal{N}_1 (u, u) + \mathcal{N}_2 (u, u) + \mathcal{N}_3 (u, u).
\label{Duh1}
\end{align}

\subsection{Relevant estimates}
\label{SUBSEC:est}
In this subsection, we present some relevant estimates for proving our local well-posedness result. We first show the following homogeneous linear estimate.
\begin{lemma}
\label{LEM:lin1}
Let $s \in \R$, $b \in \R$, and $0 < T \leq 1$. Then, we have
\begin{align*}
\big\| \eta (t) e^{it \Dl} \phi \big\|_{Z_T^{s, b}} \les_\eta \| \phi \|_{H^s (\T^2)},
\end{align*}
\end{lemma}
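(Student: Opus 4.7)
The plan is to reduce the restriction-norm estimate to a global one and then to compute directly using the factorized Fourier transform. Since $\eta(t) e^{it\Delta}\phi$ is defined on all of $\R \times \T^2$ and is, in particular, a legitimate extension of its own restriction to $[-T, T]$ in the infimum \eqref{ZsbT}, one has
\[
\|\eta(t) e^{it\Delta}\phi\|_{Z_T^{s,b}} \leq \|\eta(t) e^{it\Delta}\phi\|_{Z^{s,b}},
\]
so it suffices to prove the unrestricted bound $\|\eta(t) e^{it\Delta}\phi\|_{Z^{s,b}} \lesssim_\eta \|\phi\|_{H^s}$. The key observation driving the calculation is that the space-time Fourier transform factorizes as
\[
\F_{x,t}\bigl(\eta(t) e^{it\Delta}\phi\bigr)(n, \tau) = \hat\eta(\tau + |n|^2)\,\hat\phi(n),
\]
which permits one to separate the $n$ and $\tau$ variables throughout.

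By \eqref{Zsb}, I would decompose the target norm into the $X^{s,b}$-piece supported on $P_{\mathrm{lo}}$ and the $Y^{s,b}$-piece supported on $P_{\mathrm{hi}}$. For the $X^{s,b}$-piece, I would drop the projector and factor, producing
\[
\|\eta(t) e^{it\Delta}\phi\|_{X^{s,b}} = \bigl\|\langle \sigma\rangle^b \hat\eta(\sigma)\bigr\|_{L^2_\sigma}\,\|\phi\|_{H^s},
\]
with the first factor finite for every $b \in \R$ because $\eta$ is Schwartz (compare the remark after Lemma \ref{LEM:lin_tk}).

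For the $Y^{s,b}$-piece, I would treat the two summands in \eqref{Ysb} separately. The $\ell_n^2 L_\tau^1$ summand is immediate from Fubini and yields $\|\hat\eta\|_{L^1}\|\phi\|_{H^s}$. For the $\ell_n^2 L_\tau^2$ summand with weight $\langle \tau + |n|^2\rangle^{s/2 + b}$, I would change variables to $\sigma = \tau + |n|^2$ and exploit the $P_{\mathrm{hi}}$ support condition $|\sigma| \geq 2^{-10} |n|^2$. Using Schwartz decay $|\hat\eta(\sigma)| \lesssim_M \langle\sigma\rangle^{-M}$ with $M$ taken sufficiently large in terms of $s$ and $b$, a direct integral computation yields, for each fixed $n$,
\[
\Bigl\|\langle \sigma\rangle^{s/2 + b} \hat\eta(\sigma)\,\mathbf{1}_{\{|\sigma|\geq 2^{-10}|n|^2\}}\Bigr\|_{L^2_\sigma} \lesssim \langle n\rangle^{s},
\]
after which squaring and summing in $n$ extracts $\|\phi\|_{H^s}$.

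The argument is largely mechanical. The one mild subtlety is that, unlike the $X^{s,b}$ weight, the $Y^{s,b}$ weight $\langle\tau + |n|^2\rangle^{s/2 + b}$ does not explicitly contain an $\langle n\rangle^s$ factor. The role of the $P_{\mathrm{hi}}$ cutoff is precisely to provide the lower bound $\langle \tau + |n|^2\rangle \gtrsim \langle n\rangle^2$ on its support, which combined with the rapid decay of $\hat\eta$ recovers the missing $\langle n\rangle^s$ weight uniformly in the sign of $s$.
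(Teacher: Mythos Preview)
Your proof is correct, and in fact it establishes the lemma in the full generality stated (all $s\in\R$, $b\in\R$). The paper's own argument is different and shorter: it does not compute the $Y^{s,b}$-piece directly but instead invokes Lemma~\ref{LEM:est2} to bound $\|\,\cdot\,\|_{Z^{s,b}}\lesssim\|\,\cdot\,\|_{X^{s,b}}$ and then Lemma~\ref{LEM:lin_tk} with $k=0$ for the homogeneous $X^{s,b}$ estimate. The trade-off is that Lemma~\ref{LEM:est2} carries the hypotheses $s\le 0$ and $b>\tfrac12$ (and Lemma~\ref{LEM:lin_tk} as written assumes $b\le 1$), so the paper's chain of lemmas literally covers only that restricted range---which is all that is needed in Section~\ref{SEC:LWP}. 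Your direct computation avoids that restriction at the cost of unpacking the $P_{\mathrm{hi}}$ support condition by hand; your observation that the cutoff $|\sigma|\ge 2^{-10}|n|^2$ together with the rapid decay of $\widehat\eta$ is precisely what regenerates the weight $\langle n\rangle^s$ in the $\ell_n^2 L_\tau^2$-summand is the key point, and it is handled correctly.
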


\begin{proof}
By the definition of the $Z_T^{s, b}$-norm in \eqref{ZsbT}, Lemma \ref{LEM:est2}, and Lemma \ref{LEM:lin_tk} with $k = 0$, we have
\begin{align*}
\| \eta (t) e^{it \Dl} \phi \|_{Z_T^{s, b}} \leq \| \eta (t) e^{it \Dl} \phi \|_{Z^{s, b}} \les \| \eta (t) e^{it \Dl} \phi \|_{X^{s, b}} \les_\eta \| \phi \|_{H^s (\T^2)},
\end{align*}

\noi
as desired.
\end{proof}

We now take $b = \frac 23$ and show the following bilinear estimate.
\begin{lemma}
\label{LEM:N123}
Let $-\frac 23 < s \leq -\frac 12$ and $0 < T \leq \frac 14$. Then, we have
\begin{align*}
\| \mathcal{N}_1 (u, v) \|_{Z_T^{s, \frac 23}} &\les_\eta T^\ta \| u \|_{Z_T^{s, \frac 23}} \| v \|_{Z_T^{s, \frac 23}}, \\
\| \mathcal{N}_2 (u, v) \|_{Z_T^{s, \frac 23}} &\les_\eta T^\ta \| u \|_{Z_T^{s, \frac 23}} \| v \|_{Z_T^{s, \frac 23}}, \\
\| \mathcal{N}_3 (u, v) \|_{Z_T^{s, \frac 23}} &\les_\eta T^\ta \| u \|_{Z_T^{s, \frac 23}} \| v \|_{Z_T^{s, \frac 23}}
\end{align*}

\noi
for some $\ta > 0$, where $\mathcal{N}_1$, $\mathcal{N}_2$, and $\mathcal{N}_3$ are as defined in \eqref{N123}.
\end{lemma}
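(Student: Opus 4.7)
The plan is to handle each of $\mathcal{N}_1$, $\mathcal{N}_2$, $\mathcal{N}_3$ separately, in every case reducing the $Z^{s, \frac 23}$-estimate to the bilinear quantity $\| \jb{\tau+|n|^2}^{-1} \F_{x,t}(\eta_{2T}\cj{u}\cdot \eta_{2T}\cj{v})\|_{\ft{Z}^{s, \frac 23}}$ already controlled by Proposition \ref{PROP:bilin}. As is standard, I first extend $u, v$ from $[-T,T]$ to $\R$ so that their $Z^{s, \frac 23}$-norms almost attain the corresponding $Z^{s, \frac 23}_T$-norms, carry out the estimates for the extensions, and pass to the infimum at the end. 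Since $T \leq \frac 14$, the cutoff $\eta_{2T}$ is of the form $\varphi_{T'}$ with $T' = 2T \leq \frac 12$, so Proposition \ref{PROP:bilin} applies with $\varphi = \eta$.

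The estimates for $\mathcal{N}_3$ and $\mathcal{N}_2$ are the easier of the three. A direct computation gives
\begin{align*}
\F_{x,t} \mathcal{N}_3(u,v)(n, \tau) = -\frac{1-\psi(\tau+|n|^2)}{\tau+|n|^2}\, \F_{x,t}(\eta_{2T}\cj{u}\cdot \eta_{2T}\cj{v})(n, \tau),
\end{align*}
and since $1-\psi$ is supported on $\{|\tau+|n|^2|\geq 1\}$ where $\jb{\tau+|n|^2} \sim |\tau+|n|^2|$, one has the pointwise bound $|\F_{x,t}\mathcal{N}_3(n,\tau)| \les \jb{\tau+|n|^2}^{-1} |\F_{x,t}(\eta_{2T}\cj{u}\cdot \eta_{2T}\cj{v})(n,\tau)|$, so monotonicity \eqref{mono} and Proposition \ref{PROP:bilin} close this case. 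For $\mathcal{N}_2$, note that $\mathcal{N}_2(u,v) = i\eta(t)\, e^{it\Dl}\phi$ with
\begin{align*}
\ft\phi(n) = \int_\R \F_{x,t}(\eta_{2T}\cj{u}\cdot \eta_{2T}\cj{v})(n, \tau)\, \frac{1-\psi(\tau+|n|^2)}{i(\tau+|n|^2)}\, d\tau.
\end{align*}
Lemma \ref{LEM:lin1} bounds $\|\mathcal{N}_2\|_{Z^{s, \frac 23}_T}$ by $\|\phi\|_{H^s}$. Calling the integrand $G(n,\tau)$, we have $|\ft\phi(n)| \leq \|G(n,\cdot)\|_{L^1_\tau}$, so $\|\phi\|_{H^s} \leq \|\jb{n}^s G\|_{\l^2_n L^1_\tau}$. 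Lemma \ref{LEM:est1} controls this by $\|G\|_{\ft{Z}^{s, \frac 23}}$, and a further application of monotonicity and Proposition \ref{PROP:bilin} concludes.

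The main obstacle is $\mathcal{N}_1$, whose multiplier $\psi(\tau+|n|^2)(e^{it(\tau+|n|^2)}-1)/(i(\tau+|n|^2))$ has no built-in $\jb{\tau+|n|^2}^{-1}$ factor (indeed on $\supp \psi$ the modulation is small). I would handle it by Taylor expansion:
\begin{align*}
\frac{e^{it(\tau+|n|^2)} - 1}{i(\tau+|n|^2)} = \sum_{k=1}^\infty \frac{i^{k-1} t^k (\tau+|n|^2)^{k-1}}{k!},
\end{align*}
which rewrites $\mathcal{N}_1(u,v) = \sum_{k=1}^\infty \frac{-i^k}{k!}\, t^k \eta(t)\, e^{it\Dl}\phi_k$ with
\begin{align*}
\ft{\phi_k}(n) = \int_\R \F_{x,t}(\eta_{2T}\cj u\cdot \eta_{2T}\cj v)(n,\tau)\, \psi(\tau+|n|^2)\, (\tau+|n|^2)^{k-1}\, d\tau.
\end{align*}
On $\supp\psi$ one has $|(\tau+|n|^2)^{k-1}| \leq 2^{k-1}$, and the key observation is that $|\psi(\tau+|n|^2)| \les \jb{\tau+|n|^2}^{-1}$ there (since $\jb{\tau+|n|^2} \leq 3$ on $\supp \psi$). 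Combining Lemma \ref{LEM:est1}, monotonicity \eqref{mono}, and Proposition \ref{PROP:bilin} then yields $\|\phi_k\|_{H^s} \les 2^{k-1} T^\theta \|u\|_{Z^{s, \frac 23}}\|v\|_{Z^{s, \frac 23}}$ uniformly in $k$. Lemma \ref{LEM:lin_tk} (with $b = \frac 23 \leq 1$) produces $\|t^k \eta(t) e^{it\Dl}\phi_k\|_{X^{s, \frac 23}} \les 3^k \|\phi_k\|_{H^s}$, and Lemma \ref{LEM:est2} upgrades this to a $Z^{s, \frac 23}$-bound. Summing in $k$ converges thanks to $\sum_k 6^k/(2\cdot k!) < \infty$. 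The factorial decay, paired with the uniform-in-$k$ bilinear bound enabled by the $|\psi|\les \jb{\tau+|n|^2}^{-1}$ trick on $\supp\psi$, is what makes the $\mathcal{N}_1$ term work.
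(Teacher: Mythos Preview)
Your proof is correct and follows essentially the same approach as the paper: reduce to extensions, handle $\mathcal{N}_3$ by monotonicity, $\mathcal{N}_2$ by recognizing it as $\eta(t)e^{it\Dl}\phi$ and applying the linear estimate plus Lemma \ref{LEM:est1}, and $\mathcal{N}_1$ by the Taylor expansion together with the observation that $|\psi(\tau+|n|^2)| \les \jb{\tau+|n|^2}^{-1}$ on $\supp\psi$, then summing via Lemma \ref{LEM:lin_tk} and Lemma \ref{LEM:est2}. The only cosmetic differences are your invocation of Lemma \ref{LEM:lin1} for $\mathcal{N}_2$ (the paper unpacks it as Lemma \ref{LEM:est2} followed by Lemma \ref{LEM:lin_tk} with $k=0$) and the order in which you apply Lemma \ref{LEM:est1} and monotonicity, neither of which is a substantive deviation.
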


\begin{proof}
The idea of the proof comes from \cite{Bour93}. As in the proof of Lemma \ref{LEM:embed}, by working with the extensions of $u$ and $v$ outside $[-T, T]$, it suffices to show the following three estimates:
\begin{align*}
\| \mathcal{N}_1 (u, v) \|_{Z^{s, \frac 23}} &\les_\eta T^\ta \| u \|_{Z^{s, \frac 23}} \| v \|_{Z^{s, \frac 23}}, \\
\| \mathcal{N}_2 (u, v) \|_{Z^{s, \frac 23}} &\les_\eta T^\ta \| u \|_{Z^{s, \frac 23}} \| v \|_{Z^{s, \frac 23}}, \\
\| \mathcal{N}_3 (u, v) \|_{Z^{s, \frac 23}} &\les_\eta T^\ta \| u \|_{Z^{s, \frac 23}} \| v \|_{Z^{s, \frac 23}},
\end{align*}

\noi
for some $\ta > 0$.

To deal with the $\mathcal{N}_1$ term, by Lemma \ref{LEM:est2}, the Taylor expansion, Lemma \ref{LEM:lin_tk}, Lemma \ref{LEM:est1}, and Proposition \ref{PROP:bilin}, we obtain
\begin{align*}
\begin{split}
\| \mathcal{N}_1 (u, v) \|_{Z^{s, \frac 23}}  &\les \| \mathcal{N}_1 (u, v) \|_{X^{s, \frac 23}} \\
&\les \sum_{k = 1}^\infty \frac{1}{k!} \bigg\| t^k \eta (t) e^{it \Dl} \sum_{n \in \Z^2} e^{in \cdot x} \int_\R \F_{x, t} \big( \eta_{2T} \cj{u} \cdot \eta_{2T} \cj{v} \big) (n, \tau) \\
&\qquad \times \psi (\tau + |n|^2) \jb{\tau + |n|^2}^{k - 1} d\tau \bigg\|_{X^{s, \frac 23}} \\
&\les_\eta \sum_{k = 1}^\infty \frac{3^k}{k!} \bigg\| \jb{n}^s \int_\R \F_{x, t} \big( \eta_{2T} \cj{u} \cdot \eta_{2T} \cj{v} \big) (n, \tau) \psi (\tau + |n|^2) \jb{\tau + |n|^2}^{k - 1} d\tau \bigg\|_{\l_n^2} \\
&\les \sum_{k = 1}^\infty \frac{6^k}{k!} \big\| \jb{n}^s \jb{\tau + |n|^2}^{-1} \F_{x, t} \big( \eta_{2T} \cj{u} \cdot \eta_{2T} \cj{v} \big) (n, \tau) \big\|_{\l_n^2 L_\tau^1} \\
&\les \big\| \jb{\tau + |n|^2}^{-1} \F_{x, t} \big( \eta_{2T} \cj{u} \cdot \eta_{2T} \cj{v} \big) (n, \tau) \big\|_{\ft{Z}^{s, \frac 23}} \\
&\les_\eta T^\ta \| u \|_{Z^{s, \frac 23}} \| v \|_{Z^{s, \frac 23}}
\end{split}
\end{align*}

\noi
for some $\ta > 0$.

For the $\mathcal{N}_2$ term, using Lemma \ref{LEM:est2}, Lemma \ref{LEM:lin_tk} with $k = 0$, the fact that $1 - \psi$ is bounded by 1 and supported outside of $[-1, 1]$, Lemma \ref{LEM:est1}, and Proposition \ref{PROP:bilin}, we have
\begin{align*}
\begin{split}
\| \mathcal{N}_2 (u, v) \|_{Z^{s, \frac 23}} &\les \| \mathcal{N}_2 (u, v) \|_{X^{s, \frac 23}} \\
&\les \bigg\| \eta(t) e^{it \Dl} \sum_{n \in \Z^2} e^{in \cdot x} \int_\R \F_{x, t} \big( \eta_{2T} \cj{u} \cdot \eta_{2T} \cj{v} \big) (n, \tau) \\
&\qquad \times \frac{1 - \psi (\tau + |n|^2)}{i (\tau + |n|^2)} d\tau \bigg\|_{X^{s, \frac 23 }} \\
&\les_\eta \bigg\| \jb{n}^s \int_\R \F_{x, t} \big( \eta_{2T} \cj{u} \cdot \eta_{2T} \cj{v} \big) (n, \tau)  \frac{1 - \psi (\tau + |n|^2)}{i (\tau + |n|^2)} d\tau \bigg\|_{\l_n^2} \\
&\les \big\| \jb{n}^s \jb{\tau + |n|^2}^{-1} \F_{x, t} \big( \eta_{2T} \cj{u} \cdot \eta_{2T} \cj{v} \big) (n, \tau) \big\|_{\l_n^2 L_\tau^1} \\
&\les \big\| \jb{\tau + |n|^2}^{-1} \F_{x, t} \big( \eta_{2T} \cj{u} \cdot \eta_{2T} \cj{v} \big) (n, \tau) \big\|_{\ft{Z}^{s, \frac 23}} \\
&\les_\eta T^\ta \| u \|_{Z^{s, \frac 23}} \| v \|_{Z^{s, \frac 23}}
\end{split}
\end{align*}

\noi
for some $\ta > 0$.

For the $\mathcal{N}_3$ term, since $1 - \psi$ is bounded by 1 and supported outside of $[-1, 1]$, by the monotonicity property \eqref{mono} and Proposition \ref{PROP:bilin}, we have
\begin{align*}
\begin{split}
\| \mathcal{N}_3 (u, v) \|_{Z^{s, \frac 23}} &= \bigg\| \F_{x, t} \big( \eta_{2T} \cj{u} \cdot \eta_{2T} \cj{v} \big) (n, \tau) \frac{1 - \psi (\tau + |n|^2)}{\tau + |n|^2} \bigg\|_{\ft{Z}^{s, \frac 23}} \\
&\les \big\| \jb{\tau + |n|^2}^{-1} \F_{x, t} \big( \eta_{2T} \cj{u} \cdot \eta_{2T} \cj{v} \big) \big\|_{\ft{Z}^{s, \frac 23}} \\
&\les_\eta T^\ta \| u \|_{Z^{s, \frac 23}} \| v \|_{Z^{s, \frac 23}}
\end{split}
\end{align*}

\noi
for some $\ta > 0$. Thus, we finish our proof.
\end{proof}

\subsection{Local well-posedness}
We now use the formulation \eqref{Duh1} and the estimates in Subsection \ref{SUBSEC:est} to prove our local well-posedness result. We let $0 < T \leq \frac 14$ and fix $-\frac 23 < s \leq - \frac 12$.

For the setting of $\T^2$, by \eqref{Duh1}, Lemma \ref{LEM:lin1}, and Lemma \ref{LEM:N123}, we have
\begin{align}
\begin{split}
\big\| \G_1 [u] \big\|_{Z_T^{s, \frac 23}} &\les \big\| \eta(t) e^{it \Dl} u_0 \big\|_{Z_T^{s, \frac 23}} + \sum_{j = 1}^3 \| \mathcal{N}_j (u, u) \|_{Z_T^{s, \frac 23}} \\
&\les_\eta \| u_0 \|_{H^s (\T^2)} + T^\ta \| u \|_{Z_T^{s, \frac 23}}^2,
\end{split}
\label{ball}
\end{align}

\noi
for some $\ta > 0$. Similarly, we obtain the following difference estimate:
\begin{align}
\begin{split}
\big\| \G_1 [u] - \G_1 [v] \big\|_{Z_T^{s, \frac 23}} &\les \sum_{j = 1}^3 \Big( \| \mathcal{N}_j (u, u - v) \|_{Z_T^{s, \frac 23}} + \| \mathcal{N}_j (u - v, v) \|_{Z_T^{s, \frac 23}} \Big) \\
&\les_\eta T^\ta \Big( \| u \|_{Z_T^{s, \frac 23}} + \| v \|_{Z_T^{s, \frac 23}} \Big) \| u - v \|_{Z_T^{s, \frac 23}}.
\end{split}
\label{diff}
\end{align}

\noi
Thus, by choosing $T = T(\| u_0 \|_{H^s (\T^2)}) > 0$ sufficiently small, we have that $\G_1$ is a contraction on the ball $B_R \subset Z^{s, \frac 23}$ of radius $R \sim \| u_0 \|_{H^s (\T^2)}$.  This gives the existence part of Theorem \ref{THM:LWP} when $\mathcal{M} = \T^2$ and the uniqueness in the ball $B_R$. Also, the continuous dependence of solutions on the initial data follows easily from the formulation \eqref{Duh1}, Lemma \ref{LEM:lin1}, \eqref{ball}, and \eqref{diff}.

It remains to extend the uniqueness of solutions to \eqref{qNLS} to the entire $Z_T^{s, \frac 23}$-space. We let $u$ and $v$ be two solutions of \eqref{qNLS} in $Z_T^{s, \frac 23}$. Note that $u$ and $v$ satisfy the formulation \eqref{Duh1} for $t \in [-T, T]$. For $0 < T_0 \leq T$, we use \eqref{diff} to obtain
\begin{align*}
\| u - v \|_{Z_{T_0}^{s, \frac 23}} &\les_\eta T_0^\ta \Big( \| u \|_{Z_{T_0}^{s, \frac 23}} + \| v \|_{Z_{T_0}^{s, \frac 23}} \Big) \| u - v \|_{Z_{T_0}^{s, \frac 23}} \\
&\leq T_0^\ta \Big( \| u \|_{Z_T^{s, \frac 23}} + \| v \|_{Z_T^{s, \frac 23}} \Big) \| u - v \|_{Z_{T_0}^{s, \frac 23}}.
\end{align*}

\noi
Thus, by choosing 
\begin{align*}
T_0 = T_0 \Big( \| u \|_{Z_T^{s, \frac 23}}, \| v \|_{Z_T^{s, \frac 23}} \Big) > 0
\end{align*} 
sufficiently small, we can use Lemma \ref{LEM:embed} to obtain
\begin{align*}
\| u - v \|_{C ( [-T_0, T_0] ); H^s (\T^2)} \les \| u - v \|_{Z_{T_0}^{s, \frac 23}} = 0,
\end{align*}

\noi
so that $u \equiv v$ on $[-T_0, T_0]$. Since $T_0$ depends only on $\| u \|_{Z_T^{s, \frac 23}}$ and $\| v \|_{Z_T^{s, \frac 23}}$, we can iterate the above argument on $[-T, -T_0]$ and $[T_0, T]$. This shows that $u \equiv v$ on $[-T, T]$ after a finite number of iterations, and so the uniqueness of \eqref{qNLS} on the entire $Z_T^{s, \frac 23}$-space follows.

\begin{ackno}\rm
The author would like to thank his advisor, Tadahiro Oh, for suggesting this problem and for his support throughout the entire work. The author is also grateful to the anonymous reviewers for the helpful comments. R.L. was supported by the European Research Council (grant no. 864138 ``SingStochDispDyn'').
\end{ackno}

\medskip \noi
\textbf{Declarations.} \\
Ethical approval: \\
Not applicable.

\smallskip \noi
Competing interests: \\
The author has no competing interests to declare.

\smallskip \noi
Author's contributions: \\
Not applicable.

\smallskip \noi
Funding: \\
The author acknowledges funding from the European Research Council (grant no. 864138 ``SingStochDispDyn'').

\smallskip \noi
Availability of data and materials: \\
Not applicable.

\end{document}